\def\@cite#1#2{[{#1\if@tempswa ,~#2\fi}]}
\setlist{labelsep=5pt,leftmargin=*}
\def\matrixobject@{%
  \edef \next@{={\DirectionfromtheDirection@ }}%
  \expandafter \toks@ \next@ \plainxy@
  \let\xy@@ix@=\xyq@@toksix@
  \xyFN@ \OBJECT@}
\let\xy@entry@@norm=\entry@@norm
\def\entry@@norm@patched{%
  \let\object@=\matrixobject@
  \xy@entry@@norm }
\newcommand{\twocong}[2][0.5]{\ar@{}[#2] \save ?(#1)*{\cong}\restore}
\newcommand{\twoeq}[2][0.5]{\ar@{}[#2] \save ?(#1)*{=}\restore}
\newcommand{\rtwocell}[3][0.5]{\ar@{}[#2] \ar@{=>}?(#1)+/l 0.2cm/;?(#1)+/r 0.2cm/^{#3}}
\newcommand{\ltwocell}[3][0.5]{\ar@{}[#2] \ar@{=>}?(#1)+/r 0.2cm/;?(#1)+/l 0.2cm/^{#3}}
\newcommand{\ltwocello}[3][0.5]{\ar@{}[#2] \ar@{=>}?(#1)+/r 0.2cm/;?(#1)+/l 0.2cm/_{#3}}
\newcommand{\dtwocell}[3][0.5]{\ar@{}[#2] \ar@{=>}?(#1)+/u  0.2cm/;?(#1)+/d 0.2cm/^{#3}}
\newcommand{\dltwocell}[3][0.5]{\ar@{}[#2] \ar@{=>}?(#1)+/ur  0.2cm/;?(#1)+/dl 0.2cm/^{#3}}
\newcommand{\drtwocell}[3][0.5]{\ar@{}[#2] \ar@{=>}?(#1)+/ul  0.2cm/;?(#1)+/dr 0.2cm/^{#3}}
\newcommand{\dthreecell}[3][0.5]{\ar@{}[#2] \ar@3{->}?(#1)+/u  0.2cm/;?(#1)+/d 0.2cm/^{#3}}
\newcommand{\utwocell}[3][0.5]{\ar@{}[#2] \ar@{=>}?(#1)+/d 0.2cm/;?(#1)+/u 0.2cm/_{#3}}
\newcommand{\dtwocelltarg}[3][0.5]{\ar@{}#2 \ar@{=>}?(#1)+/u  0.2cm/;?(#1)+/d 0.2cm/^{#3}}
\newcommand{\utwocelltarg}[3][0.5]{\ar@{}#2 \ar@{=>}?(#1)+/d  0.2cm/;?(#1)+/u 0.2cm/_{#3}}
\newcommand{\cell}[3][0.5]{\ar@{}[#2]|(#1){#3}}
\newcommand{\pullbackcorner}[1][dr]{\save*!/#1-1.4pc/#1:(-1,1)@^{|-}\restore}
\newcommand{\sh}[2]{**{!/#1 -#2/}}
\DeclareMathOperator{\el}{el}
\newcommand{\cat}[1]{\mathrm{\mathcal #1}}
\newcommand{\thg}{{\mathord{\text{--}}}}
\newcommand{\abs}[1]{{\left|{#1}\right|}}
\newcommand{\res}[2]{\left.{#1}\right|_{#2}}
\newcommand{\cd}[2][]{\vcenter{\hbox{\xymatrix#1{#2}}}}
\renewcommand{\phi}{\varphi}
\newcommand{\A}{{\mathcal A}}
\newcommand{\B}{{\mathcal B}}
\newcommand{\C}{{\mathcal C}}
\newcommand{\E}{{\mathcal E}}
\renewcommand{\O}{{\mathcal O}}
\let\sec=\S
\renewcommand{\S}{{\mathcal S}}
\newcommand{\T}{{\mathcal T}}
\newcommand{\xtor}[1]{\cdl[@1]{{} \ar[r]|-{\object@{|}}^{#1} & {}}}
\def\hookleftarrowfill@{\arrowfill@\leftarrow\relbar{\relbar\joinrel\rhook}}
\def\twoheadleftarrowfill@{\arrowfill@\twoheadleftarrow\relbar\relbar}
\def\leftbararrowfill@{\arrowdoublefill@{\leftarrow\mkern-5mu}\relbar\mapstochar\relbar\relbar}
\def\Leftbararrowfill@{\arrowdoublefill@{\Leftarrow\mkern-2mu}\Relbar\Mapstochar\Relbar\Relbar}
\def\leftringarrowfill@{\arrowdoublefill@{\leftarrow\mkern-3mu}\relbar{\mkern-3mu\circ\mkern-2mu}\relbar\relbar}
\def\lefttriarrowfill@{\arrowfill@{\mathrel\triangleleft\mkern0.5mu\joinrel\relbar}\relbar\relbar}
\def\Lefttriarrowfill@{\arrowfill@{\mathrel\triangleleft\mkern1mu\joinrel\Relbar}\Relbar\Relbar}
\def\hookrightarrowfill@{\arrowfill@{\lhook\joinrel\relbar}\relbar\rightarrow}
\def\twoheadrightarrowfill@{\arrowfill@\relbar\relbar\twoheadrightarrow}
\def\rightbararrowfill@{\arrowdoublefill@{\relbar\mkern-0.5mu}\relbar\mapstochar\relbar\rightarrow}
\def\Rightbararrowfill@{\arrowdoublefill@{\Relbar\mkern-2mu}\Relbar\Mapstochar\Relbar\Rightarrow}
\def\rightringarrowfill@{\arrowdoublefill@\relbar\relbar{\mkern-2mu\circ\mkern-3mu}\relbar{\mkern-3mu\rightarrow}}
\def\righttriarrowfill@{\arrowfill@\relbar\relbar{\relbar\joinrel\mkern0.5mu\mathrel\triangleright}}
\def\Righttriarrowfill@{\arrowfill@\Relbar\Relbar{\Relbar\joinrel\mkern1mu\mathrel\triangleright}}
\def\leftrightarrowfill@{\arrowfill@\leftarrow\relbar\rightarrow}
\def\mapstofill@{\arrowfill@{\mapstochar\relbar}\relbar\rightarrow}
\renewcommand*\xleftarrow[2][]{\ext@arrow 20{20}0\leftarrowfill@{#1}{#2}}
\providecommand*\xLeftarrow[2][]{\ext@arrow 60{22}0{\Leftarrowfill@}{#1}{#2}}
\providecommand*\xhookleftarrow[2][]{\ext@arrow 10{20}0\hookleftarrowfill@{#1}{#2}}
\providecommand*\xtwoheadleftarrow[2][]{\ext@arrow 60{20}0\twoheadleftarrowfill@{#1}{#2}}
\providecommand*\xleftbararrow[2][]{\ext@arrow 10{22}0\leftbararrowfill@{#1}{#2}}
\providecommand*\xLeftbararrow[2][]{\ext@arrow 50{24}0\Leftbararrowfill@{#1}{#2}}
\providecommand*\xleftringarrow[2][]{\ext@arrow 10{26}0\leftringarrowfill@{#1}{#2}}
\providecommand*\xlefttriarrow[2][]{\ext@arrow 80{24}0\lefttriarrowfill@{#1}{#2}}
\providecommand*\xLefttriarrow[2][]{\ext@arrow 80{24}0\Lefttriarrowfill@{#1}{#2}}
\renewcommand*\xrightarrow[2][]{\ext@arrow 01{20}0\rightarrowfill@{#1}{#2}}
\providecommand*\xRightarrow[2][]{\ext@arrow 04{22}0{\Rightarrowfill@}{#1}{#2}}
\providecommand*\xhookrightarrow[2][]{\ext@arrow 00{20}0\hookrightarrowfill@{#1}{#2}}
\providecommand*\xtwoheadrightarrow[2][]{\ext@arrow 03{20}0\twoheadrightarrowfill@{#1}{#2}}
\providecommand*\xrightbararrow[2][]{\ext@arrow 01{22}0\rightbararrowfill@{#1}{#2}}
\providecommand*\xRightbararrow[2][]{\ext@arrow 04{24}0\Rightbararrowfill@{#1}{#2}}
\providecommand*\xrightringarrow[2][]{\ext@arrow 01{26}0\rightringarrowfill@{#1}{#2}}
\providecommand*\xrighttriarrow[2][]{\ext@arrow 07{24}0\righttriarrowfill@{#1}{#2}}
\providecommand*\xRighttriarrow[2][]{\ext@arrow 07{24}0\Righttriarrowfill@{#1}{#2}}
\providecommand*\xmapsto[2][]{\ext@arrow 01{20}0\mapstofill@{#1}{#2}}
\providecommand*\xleftrightarrow[2][]{\ext@arrow 10{22}0\leftrightarrowfill@{#1}{#2}}
\providecommand*\xLeftrightarrow[2][]{\ext@arrow 10{27}0{\Leftrightarrowfill@}{#1}{#2}}
\numberwithin{equation}{section}
\theoremstyle{plain}
\newtheorem*{Thm*}{Theorem}
\newtheorem{Thm}{Theorem}
\newtheorem{Prop}[Thm]{Proposition}
\newtheorem{Cor}[Thm]{Corollary}
\newtheorem{Lemma}[Thm]{Lemma}
\theoremstyle{definition}
\newtheorem{Defn}[Thm]{Definition}
\newtheorem{Ex}[Thm]{Example}
\newtheorem{Rk}[Thm]{Remark}
\newcommand{\Cat}{\cat{Cat}}
\newcommand{\Set}{\cat{Set}}
\newcommand{\atwo}{{\mathbbm 2}}
\newcommand{\fibre}{\phi}
\newcommand{\somesetvaluedfunctor}{Q}
\newcommand{\someothercat}{{\mathcal{C}'}}
\begin{document}
\title{Operadic categories and d\'ecalage}
\author{Richard Garner} 
\address{Centre of Australian Category Theory, Macquarie University, NSW 2109, Australia} 
\email{richard.garner@mq.edu.au}
\author{Joachim Kock}
\address{Departament de matem\`atiques, Universitat Aut\`onoma de Barcelona}
\email{kock@mat.uab.cat}
\author{Mark Weber}
\address{Faculty of Mathematics and Physics, Charles University, Prague}
\email{mark.weber.math@gmail.com}

\subjclass[2010]{Primary: 18D50, 18C15, 18C20}
\date{\today}

\thanks{The authors acknowledge, with gratitude, the following grant
  funding. Garner was supported by Australian Research Council grants
  DP160101519 and FT160100393; Kock by grants MTM2016-80439-P
  (AEI/FEDER, UE) of Spain and 2017-SGR-1725 of Catalonia; and Weber
  by Czech Science Foundation grant GA CR P201/12/G028.}

\begin{abstract}
  Batanin and Markl's operadic categories are categories in which each
  map is endowed with a finite collection of ``abstract
  fibres''---also objects of the same category---subject to suitable
  axioms. We give a reconstruction of the data and axioms of operadic
  categories in terms of the d\'ecalage comonad $\mathsf{D}$ on small
  categories. A simple case involves \emph{unary} operadic
  categories---ones wherein each map has exactly one abstract
  fibre---which are exhibited as categories which are, first of all,
  coalgebras for the comonad $\mathsf{D}$, and, furthermore, algebras
  for the monad $\mathsf{\tilde D}$ induced on $\cat{Cat}^\mathsf{D}$
  by the forgetful--cofree adjunction. A similar description is found
  for general operadic categories arising out of a corresponding
  analysis that starts from a ``modified d\'ecalage'' comonad
  $\mathsf{D}_m$ on the arrow category $\cat{Cat}^\atwo$.
\end{abstract}
\maketitle

\section{Introduction}

\emph{Operads} originated in algebraic topology, first appearing in
Boardman and Vogt~\cite{Boardman1968Homotopy-everything} under the name
``category of operators in standard form'', with the modern name and
modern definition being provided by May in~\cite{May1972The-geometry}.
They quickly caught on, with applications subsequently being found not
only in topology, but also in algebra, geometry, physics and beyond;
see~\cite{Markl2002Operads} for an overview. As the use of operads has
grown, it has proven useful to recast the definition: rather than
explicitly listing the data and axioms, one may re-express them in
various more abstract ways~\cite{Baez1998Higher-Dimensional,
  Joyal1986Foncteurs, Kelly2005On-the-operads}, each of which points
towards a range of practically useful generalisations of the original
notion.

This has led to a rich profusion of operad-like structures, and
various authors have proposed unifying frameworks to bring some order
to this proliferation. One such framework is that of \emph{operadic
  categories}~\cite{Batanin2015Operadic}, introduced by Batanin
and Markl to specify certain kinds of generalised operad necessary for
their proof of the duoidal Deligne conjecture. An operadic category is
a combinatorial object which specifies a flavour of operad; an
``algebra'' for an operadic category is an operad of that flavour.
Such an operad will, in turn, have its own algebras, but this extra
layer will not concern us here.

As the name suggests, operadic categories are categories, but endowed
with extra structure of a somewhat delicate nature. This structure
seems to invite attempts at reconfiguration, so as better to link it
to other parts of the mathematical landscape. One such reconfiguration
was given by Lack~\cite{Lack2018Operadic}, who drew a tight
correspondence between operadic categories and the \emph{skew-monoidal
  categories} of Szlach\' anyi~\cite{Szlachanyi2012Skew-monoidal},
which in recent years have figured prominently in categorical quantum
algebra and work of the Australian school of category theorists.

The present paper gives another reconfiguration of the definition of
operadic category, which links it to the (upper) \emph{d\' ecalage}
construction. While primarily an operation on simplicial sets,
d\'ecalage may also---via the nerve functor---be seen as an operation
on categories; namely, that which takes a category to the disjoint
union of its~slices:
\begin{equation*}
    D(\C) \ = \ \textstyle\sum_{X \in \C} \C / X\rlap{ .}
\end{equation*}
  
There are two main aspects to the tight relationship between operadic
categories and d\'ecalage. To explain these, we must first recall 
the data for an operadic category. These are: a small category $\C$ with a
chosen terminal object in each connected component; a cardinality
functor $\abs{\thg} \colon \C \rightarrow \S$ into the category of
finite ordinals and arbitrary mappings; and an operation assigning to
every $f \colon Y \rightarrow X$ in $\C$ and $i \in \abs X$ an
``abstract fibre'' $f^{-1}(i) \in \C$, functorially in $Y$.

The first connection between operadic categories and d\'ecalage arises
from the fact that the d\'ecalage construction on categories underlies
a \emph{comonad} $\mathsf{D}$ on $\cat{Cat}$, whose coalgebras may be
identified, as in Proposition~\ref{prop:D-CoAlg} below, with
categories endowed with a choice of terminal object in each connected
component. In particular, each operadic category is a coalgebra for
the d\'ecalage comonad.

The second connection arises through the functorial assignation of
abstract fibres $f \mapsto f^{-1}(i)$ in an operadic category.
Functoriality says that, for fixed $X \in \C$ and $i \in \abs X$, this
assignation is the action on objects of a functor
$\varphi_{X,i} \colon \C / X \rightarrow \C$, so that the totality of
the abstract fibres can be expressed via a single functor
\begin{equation}\label{eq:9}
  \varphi \colon \textstyle\sum_{X \in \C, i \in \abs X} \C / X \rightarrow \C\rlap{ .}
\end{equation}
The domain of this functor is clearly related to the d\'ecalage of
$\C$, and in due course, we will explain it in terms of a
\emph{modified d\'ecalage} construction on categories endowed with a
functor to $\S$. However, there is a special case where no
modification is necessary. We call an operadic category \emph{unary}
if each $\abs X$ is a singleton; in this case, the domain
of~\eqref{eq:9} is precisely the d\'ecalage $D(\C)$, so that the fibres
of a unary operadic category are encoded in a single functor
$D(\C) \rightarrow \C$.

So, for a unary operadic category $\C$, we have on the one hand, that
$\C$ is a $\mathsf D$-coalgebra; and on the other, that $\C$ is
endowed with a map $D(\C) \rightarrow \C$. To reconcile these
apparently distinct facts, we apply a general observation: any comonad
$\mathsf{C}$ on a category $\A$ induces a monad $\mathsf{\tilde C}$ on
the category of $\mathsf C$-coalgebras $\A^\mathsf C$, namely, the
monad generated by the forgetful--cofree adjunction
$\A^\mathsf{C} \leftrightarrows \A$. In the case of the d\'ecalage
comonad, we induce a d\'ecalage monad $\mathsf{\tilde D}$ on
$\cat{Cat}^\mathsf{D}$;
and the axioms of a unary operadic category turn out to be
captured \emph{precisely} by the requirement that the map
$D(\C) \rightarrow \C$ giving abstract fibres should endow the
$\mathsf D$-coalgebra $\C$ with $\mathsf{\tilde D}$-algebra structure
in $\cat{Cat}^\mathsf{D}$. Our first main result is thus:

\begin{Thm*}
  The category of algebras for the d\'ecalage monad
  $\mathsf{\tilde{D}}$ on $\cat{Cat}^\mathsf{D}$ is isomorphic to the
  category of unary operadic categories.
\end{Thm*}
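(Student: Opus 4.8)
The plan is to unpack the notion of $\mathsf{\tilde D}$-algebra into explicit combinatorial data and axioms, to recognise these as exactly the data and axioms of a unary operadic category, and to observe that the resulting bijection is the identity on underlying categories and hence extends to an isomorphism of categories.

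First I would record what a $\mathsf{\tilde D}$-algebra is. Since $\mathsf{\tilde D} = GU$ for the forgetful--cofree adjunction $U \colon \Cat^{\mathsf{D}} \leftrightarrows \Cat \colon G$, its unit at a $\mathsf{D}$-coalgebra $(\C,\gamma)$ is the classifying map $\gamma \colon (\C,\gamma) \to (D\C,\delta_\C) = GU(\C,\gamma)$, and its multiplication there is $D\epsilon_\C \colon (D^2\C,\delta_{D\C}) \to (D\C,\delta_\C)$, where $\epsilon$ and $\delta$ are the counit and comultiplication of $\mathsf{D}$. Using Proposition~\ref{prop:D-CoAlg} to identify a $\mathsf{D}$-coalgebra with a small category $\C$ equipped, in each connected component, with a chosen terminal object $t_c$ and the unique arrow $\gamma_c \colon c \to t_c$, a $\mathsf{\tilde D}$-algebra is then precisely such a $(\C,\gamma)$ together with a functor $a \colon D\C \to \C$ --- equivalently, since $D\C = \sum_X \C/X$ is a coproduct of categories, a family of functors $a_X \colon \C/X \to \C$ --- subject to:
\begin{enumerate}[(a)]
\item $a$ is a $\mathsf{D}$-coalgebra morphism, i.e.\ $\gamma \circ a = Da \circ \delta_\C$;
\item the unit law $a \circ \gamma = \mathrm{id}_\C$;
\item the associativity law $a \circ Da = a \circ D\epsilon_\C \colon D^2\C \to \C$.
\end{enumerate}

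Next I would make the structure of $\mathsf{D}$ explicit --- $\epsilon_\C$ is the domain functor $(f \colon Y \to X) \mapsto Y$; the objects of $D^2\C$ are composable pairs $A \to B \to C$ in $\C$; $\delta_\C$ sends $(f \colon Y \to X)$ to the composable pair $Y \xrightarrow{f} X \xrightarrow{\mathrm{id}} X$; and $D\epsilon_\C$ sends a composable pair $A \xrightarrow{h} B \to C$ to the object $(h \colon A \to B)$ of $D\C$ --- and substitute this into (a)--(c). Writing $f^\bullet := a_X(f \colon Y \to X)$ for the ``abstract fibre'' so produced, a short computation shows that (a) says $\mathrm{id}_X^\bullet$ is the chosen terminal object of the connected component containing every $f^\bullet$ with codomain $X$, with $\gamma_{f^\bullet}$ the value of $a_X$ on the canonical map $(f \colon Y \to X) \to \mathrm{id}_X$ of $\C/X$; that (b) says $\gamma_c^\bullet = c$; and that (c) says that for each composable pair $Z \xrightarrow{h} Y \xrightarrow{f} X$ the abstract fibre of the arrow $a_X(h \colon fh \to f) \colon (fh)^\bullet \to f^\bullet$ is $h^\bullet$. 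On the operadic side, if $\C$ is a unary operadic category then the cardinality functor $\abs{\thg} \colon \C \to \S$ is forced to be the constant functor at $1$ and carries no information, so the surviving structure is a choice of local terminal objects together with the single fibre functor $\varphi \colon \sum_X \C/X = D\C \to \C$ of~\eqref{eq:9}; and the axioms of Batanin and Markl then reduce to the identity-fibre axiom (that $\mathrm{id}_X^{-1}$ is a chosen local terminal), the unit axiom (that the fibre of $c \to t_c$ is $c$), and the composition axiom (that the fibre of $\varphi_X(h) \colon (fh)^{-1} \to f^{-1}$ over the canonical arrow is $h^{-1}$), the remaining axioms --- which constrain only cardinalities --- being vacuous since every cardinality equals $1$. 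Identifying each $a_X$ with $\varphi_X$ and using the functoriality of the fibre functors to reconcile the two forms of the identity-fibre condition, the axioms (a)--(c) match those of a unary operadic category exactly.

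Finally I would verify that this correspondence is functorial: a morphism of $\mathsf{\tilde D}$-algebras is a $\mathsf{D}$-coalgebra morphism commuting with the structure maps $a$, which under the above identifications is exactly a functor preserving the chosen local terminals and the abstract fibres, i.e.\ a morphism of unary operadic categories; so the bijection on objects upgrades to the desired isomorphism of categories. I expect the middle step to be the main obstacle: one must get the comonad structure of $\mathsf{D}$ --- in particular the category $D^2\C$ and the functors $D\epsilon_\C$ and $\delta_\C$ --- exactly right, and then compare the resulting equalities of functors $D\C \to \C$ and $D^2\C \to \C$, on objects \emph{and} on morphisms, with the Batanin--Markl axioms (some of which are phrased on objects with separate naturality clauses); one must also check that precisely the cardinality-free part of those axioms survives in the unary case, so that nothing is over- or under-counted on either side.
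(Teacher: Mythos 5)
Your proposal is correct and follows essentially the same route as the paper: identify $\mathsf{D}$-coalgebras with categories with chosen local terminals via Proposition~\ref{prop:D-CoAlg}, compute the induced monad $\mathsf{\tilde D}$ explicitly (unit $\tau$, multiplication $D\varepsilon_\C$), and match the coalgebra-morphism condition with~\ref{axQ:BM-fibres-of-identities}, the unit law with~\ref{axQ:BM-fibres-of-tau-maps}, and the associativity law with~\ref{axQ:BM-fibres-of-local-fibres}, the cardinality axioms being vacuous in the unary case. The only caveat is that you state the unit and associativity laws only on objects in the body of the argument, but you correctly flag that the morphism-level clauses ($f^{\tau_X}_\ast = f$ and $(h^{fg}_\ast)^{g^f_\ast}_\ast = h^g_\ast$) must also be checked, which is exactly what the paper does.
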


In order to remove the qualifier ``unary'' from this theorem and
accommodate the ``multi'' aspect of the general definition, we will
need, as anticipated above, to adjust the d\'ecalage construction.
Rather than the d\'ecalage comonad $\mathsf{D}$ on $\cat{Cat}$, we
will consider a \emph{modified d\'ecalage} comonad $\mathsf{D}_m$ on
the arrow category $\cat{Cat}^\atwo$ whose action on objects is given
by
\begin{equation}\label{eq:1}
  \E \xrightarrow{P} \C \qquad \mapsto \qquad \textstyle\sum_{Y \in 
    \E} \E / Y \xrightarrow{\Sigma_{Y \in \C} P/Y} \sum_{Y \in \E}
  \C / PY\rlap{ .}
\end{equation}

To relate this to operadic categories, we consider those objects of
$\cat{Cat}^\atwo$ which are obtained to within isomorphism as the
canonical projection $P_\C \colon \E_\C \rightarrow \C$ from the
\emph{category of elements} of a functor
$\abs{\thg} \colon \C \rightarrow \S$. These objects span a full
subcategory of $\cat{Cat}^\atwo$ which is equivalent to the lax slice
category $\cat{Cat} /\!/ \cat{S}$; they are moreover closed under the
action of $\mathsf{D}_m$, which thus restricts back to a comonad on
$\cat{Cat}/\!/\S$. Now by following the same trajectory as the unary
case, starting from this comonad on $\cat{Cat}/\!/\S$, we already come
very close to characterising operadic categories.

The first point to make is that an object
$(\C, \abs{\thg} \colon \C \rightarrow \S) \in \cat{Cat}/\!/\S$ is a
$\mathsf{D}_m$-coalgebra just when $\C$ has chosen terminal objects in
each connected component, and $\abs{\thg}$ sends each of these to
$1 \in \S$. Since these are among the requirements for an operadic
category, every operadic category gives rise to a
$\mathsf{D}_m$-coalgebra.

Like before, we can ask what it means to equip such a 
$\mathsf{D}_m$-coalgebra with algebra structure for the induced monad
$\mathsf{\tilde D}_m$ on $(\cat{Cat}/\!/\S)^{\mathsf{D}_m}$. The
action of this monad on $(\C, \abs{\thg})$ is given by the category
$\sum_{X \in \C, i \in \abs X} \C / X$ of~\eqref{eq:9}, endowed with a
suitable functor to $\S$; therefore, the basic datum of
$\mathsf{\tilde D}_m$-algebra structure is a functor of the same form
as~\eqref{eq:9}. This seems promising, but what  we find is:

\begin{Thm*}
  The category of algebras for the modified d\'ecalage monad
  $\mathsf{\tilde D}_m$ on $(\cat{Cat}/\!/\S)^{\mathsf{D}_m}$ is
  isomorphic to the category of lax-operadic categories.
\end{Thm*}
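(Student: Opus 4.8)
The plan is to run, by essentially the same method as in the unary case (our first main theorem), the analogous argument with $\mathsf{D}$, $\cat{Cat}^{\mathsf{D}}$ and $\mathsf{\tilde D}$ replaced throughout by $\mathsf{D}_m$, $(\cat{Cat}/\!/\S)^{\mathsf{D}_m}$ and $\mathsf{\tilde D}_m$. The coalgebra side is already in hand: an object $(\C, \abs{\thg})$ of $\cat{Cat}/\!/\S$ carries a (necessarily unique) $\mathsf{D}_m$-coalgebra structure $\gamma$ exactly when $\C$ has a chosen terminal object $U_Z$ in each connected component $Z$ with $\abs{U_Z} = 1$, and a morphism of $\mathsf{D}_m$-coalgebras is a morphism $(H, \bar H)$ of $\cat{Cat}/\!/\S$ whose underlying functor preserves these terminals. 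This reproduces the ``category with chosen local terminals together with a cardinality functor'' part of a lax-operadic category, so it remains to show that $\mathsf{\tilde D}_m$-algebra structure is exactly the abstract-fibre structure.

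To that end I would first spell out the monad $\mathsf{\tilde D}_m = KG$ induced by the forgetful--cofree adjunction $G \colon (\cat{Cat}/\!/\S)^{\mathsf{D}_m} \rightleftarrows \cat{Cat}/\!/\S \colon K$. Its value at a $\mathsf{D}_m$-coalgebra $(\C, \abs{\thg})$ has underlying category the $\sum_{X \in \C,\, i \in \abs{X}} \C/X$ of~\eqref{eq:9}, with functor to $\S$ sending $(X, i, f \colon W \to X)$ to the sub-ordinal $\abs{f}^{-1}(i) \subseteq \abs{W}$ (which is what~\eqref{eq:1} computes on $P_\C$), and with the $\mathsf{D}_m$-coalgebra structure whose chosen terminal in the component of $(X, i, f)$ is $(X, i, \mathrm{id}_X)$. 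Its unit at $(\C, \abs{\thg})$ is the structure map $\gamma$ itself, regarded as a coalgebra morphism into the cofree coalgebra; and its multiplication $\mu$, coming from the comonad counit of $\mathsf{D}_m$, is the functor
\[
  \bigl(X, i, f \colon W \to X,\ j \in \abs{f}^{-1}(i),\ a \colon W' \to W\bigr) \ \longmapsto\ \bigl(W, j, a \colon W' \to W\bigr)\rlap{ .}
\]

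Next I would unravel what it is to equip a $\mathsf{D}_m$-coalgebra $(\C, \abs{\thg})$ with $\mathsf{\tilde D}_m$-algebra structure $\alpha$. As a morphism of $\cat{Cat}/\!/\S$, $\alpha$ consists of a functor $\varphi \colon \sum_{X,i} \C/X \to \C$---whose action $(X, i, f) \mapsto f^{-1}(i)$ on objects, and on slice morphisms, is precisely the abstract-fibre operation, functorial in the domain---together with a natural family of maps $\bar\varphi_{X,i,f} \colon \abs{f}^{-1}(i) \to \abs{f^{-1}(i)}$ in $\S$, the lax comparison maps appearing in the definition of a lax-operadic category. Asking that $\alpha$ be a morphism of $\mathsf{D}_m$-coalgebras then forces $(\mathrm{id}_X)^{-1}(i)$ to be the chosen terminal of the connected component of $f^{-1}(i)$---so that all abstract fibres over a fixed $i \in \abs{X}$ lie in one component---and forces $\varphi$ to carry chosen terminals correctly; the unit law $\alpha \circ \gamma = \mathrm{id}$ yields $(!_X)^{-1}(\ast) = X$ for the unique map $!_X$ from $X$ into its chosen terminal; and the associativity law $\alpha \circ \mathsf{\tilde D}_m(\alpha) = \alpha \circ \mu$, evaluated on the object displayed above, reads
\[
  (a^{\sharp})^{-1}\bigl(\bar\varphi_{X,i,f}(j)\bigr) \ = \ a^{-1}(j)\rlap{ ,}
\]
where $a^{\sharp} \colon (fa)^{-1}(i) \to f^{-1}(i)$ is the map on abstract fibres induced by the slice morphism $a$---this being exactly the Batanin--Markl ``fibres of fibres'' axiom. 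Matching the list of axioms of a lax-operadic category against these conditions one by one, and checking in parallel that a morphism of $\mathsf{\tilde D}_m$-algebras is precisely a morphism of lax-operadic categories (the coalgebra part giving preservation of local terminals together with a cell on cardinalities, the algebra part giving $H(f^{-1}(i)) = (Hf)^{-1}(\bar H(i))$ and compatibility of the comparison maps), produces mutually inverse assignments on objects and on morphisms, hence the claimed isomorphism of categories.

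The main obstacle is this last matching. Computing the comultiplication of $\mathsf{D}_m$, equivalently the multiplication of $\mathsf{\tilde D}_m$, on the category-of-elements presentation is fiddly but routine, amounting to identifying the double d\'ecalage $\mathsf{D}_m\mathsf{D}_m\C$ with a category of iterated slices equipped with its functor to $\S$; the genuinely delicate point is to verify that the unit, coalgebra-morphism and associativity equations for a $\mathsf{\tilde D}_m$-algebra reproduce \emph{exactly} the axioms of a lax-operadic category---neither more nor fewer---and, in particular, that the one feature separating this description from the notion of operadic category \emph{tout court} is that the comparison maps $\abs{f}^{-1}(i) \to \abs{f^{-1}(i)}$ are allowed to be arbitrary maps of $\S$ rather than bijections. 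It is precisely this relaxation, forced on us by the non-invertible structure cells of the lax slice $\cat{Cat}/\!/\S$, that the qualifier ``lax'' records.
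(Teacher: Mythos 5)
Your proposal follows essentially the same route as the paper: identify the $\mathsf{D}_m$-coalgebras with categories carrying local terminals and a cardinality functor sending each of them to $\underline 1$, compute the induced monad $\mathsf{\tilde D}_m$ explicitly (unit given by the coalgebra structure map, multiplication by reindexing along the inclusions $\varepsilon_{\abs f, i}$), and then match the unit and associativity laws for an algebra against the lax-operadic data and axioms---and the ``delicate matching'' you defer is exactly the verification the paper carries out. The only slip is peripheral: in a genuine operadic category the comparison maps $\abs{f}^{-1}(i) \to \abs{\smash{f^{-1}(i)}}$ are identities, not merely bijections, which is why the paper later isolates operadic categories as those algebras whose structure map is a \emph{strict} triangle over $\S$.
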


Here, a \emph{lax-operadic category} is a new notion, which
generalises that of operadic category by replacing the assertion of
equalities $\abs{f}^{-1}(i) = \abs{f^{-1}(i)}$ on cardinalities of
abstract fibres with a collection of coherent functions
$\abs{f}^{-1}(i) \rightarrow \abs{f^{-1}(i)}$. Since it is not yet
clear that this extra generality has any practical merit, our final
objective is to find a version of the above result which removes the
qualifier ``lax''.

The source of the laxity is easy to pinpoint. A
$\mathsf{\tilde D}_m$-algebra structure is given by a map
$D_m(\C, \abs{\thg}) \rightarrow (\C, \abs{\thg})$ in
$\cat{Cat} /\!/ \S$, whose data involves not only a
functor~\eqref{eq:9}, but also a natural transformation relating the
functors to $\S$. The components of this natural transformation are
the comparison functions
$\abs{f}^{-1}(i) \rightarrow \abs{f^{-1}(i)}$, so that the genuine
operadic categories correspond to those
$\mathsf{\tilde D}_m$-algebras whose structure map is given by a
\emph{strictly} commuting triangle over $\S$.

However, we cannot simply restrict the modified d\'ecalage comonad
$\mathsf{D}_m$ from the lax slice category $\cat{Cat} /\!/ \S$ back to
the strict slice category $\cat{Cat} / \S$, and then proceed as
before. The problem is that $\mathsf{D}_m$ does not restrict, since
the counit maps $\varepsilon_\C \colon D_m(\C) \rightarrow \C$ in
$\cat{Cat} /\!/ \S$ involve triangles which are genuinely
lax-commutative. On the other hand, it turns out that
we \emph{can}
restrict the lifted monad $\mathsf{\tilde D}_m$ on
$(\cat{Cat} /\!/ \S)^{\mathsf{D}_m}$ back to the subcategory
$(\cat{Cat}^\mathsf{D}) / \S$ on the strictly commuting triangles.
Having done so, our final result quickly follows:

\begin{Thm*}
  The category of algebras for the modified d\'ecalage monad
  $\mathsf{\tilde D}_m$ on $\cat{Cat}^\mathsf{D}/\S$ is isomorphic
  to the category of operadic categories.
\end{Thm*}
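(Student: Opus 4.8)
The strategy is to combine the previous two theorems. By the second main result, algebras for $\mathsf{\tilde D}_m$ on $(\cat{Cat}/\!/\S)^{\mathsf{D}_m}$ are exactly lax-operadic categories, and (as explained in the introduction) the genuine operadic categories are precisely those lax-operadic categories whose structure map $D_m(\C,\abs{\thg}) \to (\C,\abs{\thg})$ lives over a \emph{strictly} commuting triangle, i.e.\ whose underlying $\mathsf{D}_m$-coalgebra is a strictly-commuting object of $\cat{Cat}^\mathsf{D}/\S$ and whose algebra structure map is strict. So the task reduces to showing two things: first, that the lifted monad $\mathsf{\tilde D}_m$ genuinely restricts from $(\cat{Cat}/\!/\S)^{\mathsf{D}_m}$ to the full subcategory $\cat{Cat}^\mathsf{D}/\S$ of strictly-commuting triangles; and second, that under this restriction the category of $\mathsf{\tilde D}_m$-algebras is carved out inside the category of lax-operadic categories exactly by the strictness condition, yielding operadic categories on the nose.

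**Key steps, in order.** (1) First identify the objects: an object of $(\cat{Cat}/\!/\S)^{\mathsf{D}_m}$ lying in $\cat{Cat}^\mathsf{D}/\S$ is a $\mathsf{D}_m$-coalgebra $(\C,\abs{\thg})$ whose coalgebra structure map is a strict map over $\S$; by the coalgebra description recalled in the introduction, this is exactly a category $\C$ with chosen terminal objects in each component together with a strict cardinality functor sending those to $1$, i.e.\ precisely the ``ambient'' data of an operadic category. (2) Then check that $\mathsf{\tilde D}_m$ restricts: one must verify that if $(\C,\abs{\thg})$ is such a strictly-commuting coalgebra, then $D_m(\C,\abs{\thg})$ — computed as $\sum_{X\in\C,\,i\in\abs X}\C/X$ with its natural functor to $\S$, equipped with its induced $\mathsf{D}_m$-coalgebra structure — is again a strictly-commuting object of $\cat{Cat}^\mathsf{D}/\S$, and that the monad multiplication and unit of $\mathsf{\tilde D}_m$ are strict maps over $\S$ here. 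The point is that although the counit $\varepsilon$ of $\mathsf{D}_m$ is only lax, the monad $\mathsf{\tilde D}_m$ is built from forgetful--cofree data in which the relevant comparison ends up strict on this subcategory. (3) Having established the restriction, invoke the formal fact that for a monad restricting along a full subcategory inclusion, the algebras for the restricted monad are exactly those algebras for the original monad whose underlying object and whose structure map both lie in the subcategory. (4) Finally, translate: a $\mathsf{\tilde D}_m$-algebra in $\cat{Cat}^\mathsf{D}/\S$ is a lax-operadic category whose structure map is strict over $\S$, and strictness of that map says precisely that the comparison functions $\abs f^{-1}(i) \to \abs{f^{-1}(i)}$ are all identities, i.e.\ $\abs f^{-1}(i) = \abs{f^{-1}(i)}$; so the algebra is exactly an operadic category. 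Checking that this bijection is compatible with morphisms — a morphism of $\mathsf{\tilde D}_m$-algebras is a morphism of lax-operadic categories that is moreover strict over $\S$, which is exactly a morphism of operadic categories — completes the isomorphism of categories.

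**The main obstacle.** The crux is step (2): verifying that $\mathsf{\tilde D}_m$ really does restrict to $\cat{Cat}^\mathsf{D}/\S$, despite the fact that $\mathsf{D}_m$ itself does \emph{not}. One must pin down the formula for $\mathsf{\tilde D}_m$ — as the monad generated by forgetful $\dashv$ cofree between $(\cat{Cat}/\!/\S)^{\mathsf{D}_m}$ and $\cat{Cat}/\!/\S$ — explicitly enough to see that when fed a strictly-commuting triangle it returns a strictly-commuting triangle, with strict structure maps. Concretely, $\mathsf{\tilde D}_m$ on a coalgebra $c\colon A \to \mathsf{D}_m A$ is $\mathsf{D}_m c$ followed by cofree structure, so one has to trace through how the lax counit triangles of $\mathsf{D}_m$ cancel against the coalgebra map $c$ — which is strict, by hypothesis — leaving a strict composite. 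This is the one genuinely delicate compatibility; once it is in hand, everything else is the bookkeeping of steps (1), (3), (4), which is routine given the earlier theorems.
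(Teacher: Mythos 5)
Your proposal is correct and follows essentially the same route as the paper: the paper likewise restricts the lifted monad $\mathsf{\tilde D}_m$ (rather than the comonad $\mathsf{D}_m$) from the lax to the strict slice by inspecting its explicit description---the functor preserves strict triangles, and every unit and multiplication component is already a strict triangle---and then identifies the algebras whose structure map is strict with the operadic categories, exactly as in Proposition~\ref{prop:5}. One small correction to your step~(1): the inclusion $\cat{Cat}^{\mathsf{D}}/\S \hookrightarrow \cat{Cat}^{\mathsf{D}}/\!/\S$ is wide but not full (same objects, only the strictly commuting morphisms), so there is no strictness condition on objects to check; the only condition carving out operadic categories among lax-operadic ones is strictness of the algebra structure map, as in your step~(4).
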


The rest of this article will fill in the details of the above sketch.
The plan is quite simple. In Section~\ref{sec:operadic-categories}, we
recall Batanin and Markl's definition of operadic
category~\cite{Batanin2015Operadic}; then in
Section~\ref{sec:decalage} we recall the d\'ecalage construction and
establish the first of the two links with the notion of operadic
category. In Section~\ref{sec:line-oper-categ}, we prove our first
main theorem, characterising unary operadic categories in terms of
d\'ecalage. Section~\ref{sec:modified-decalage} is devoted to
describing the modified d\'ecalage construction required to capture
general operadic categories. Finally, in
Sections~\ref{sec:char-lax-oper} and~\ref{sec:char-oper-categ}, we
prove our second and third theorems, giving the characterisations of
lax-operadic categories and, finally, of operadic categories
themselves.

\section{Operadic categories}
\label{sec:operadic-categories}

We begin with some necessary preliminaries. We say that a category
$\C$ is \emph{endowed with local terminal objects} if each connected
component of $\C$ is provided with a chosen terminal object; we write
$uX$ for the chosen terminal in the connected component of $X \in \C$
and $\tau_X \colon X \rightarrow uX$ for the unique map.

We write $\S$ for the category whose objects are the sets
$\underline n = \{1, \dots, n\}$ for $n \in \mathbb{N}$ and whose maps
are arbitrary functions. Note that $\S$ has a \emph{unique} terminal
object $\underline 1$ which we use to endow $\S$ with local terminal
objects; we may also sometimes write the unique element of
$\underline 1$ as $\ast$ rather than $1$.

Given
$\varphi \colon \underline m \rightarrow \underline n$ in $\S$ and
$i \in \underline n$, there is a unique monotone injection
\begin{equation}\label{eq:10}
  \varepsilon_{\varphi, i} \colon \varphi^{-1}(i) \rightarrow
  \underline m
\end{equation}
in $\S$ whose image is $\{\,j \in \underline m : \varphi(j) = i\,\}$;
we call the object $\varphi^{-1}(i)$ the \emph{fibre of $\varphi$ at
  $i$}. Given also
$\psi \colon \underline \ell \rightarrow \underline m$ in $\S$, we
write $\psi^\varphi_{i}$ for the unique map of $\S$ rendering
\begin{equation}\label{eq:11}
  \cd{
    {(\varphi\psi)^{-1}(i)} \ar[r]^-{\psi^\varphi_{i}}
    \ar[d]_{\varepsilon_{\varphi \psi, i}} &
    {\varphi^{-1}(i)} \ar[d]^{\varepsilon_{\varphi, i}}  \\
    {\underline \ell} \ar[r]^-{\psi}
     &
    {\underline m} 
  }
\end{equation}
commutative, and call it the \emph{fibre map of $\psi$ with respect to
  $\varphi$ at $i$}.

The Batanin--Markl notion of operadic category which we now reproduce
can be seen as specifying a category with \emph{formal} notions of
fibre and fibre map. The fibres of a map need not be
subobjects of the domain as in the case of $\mathcal{S}$, but the
axioms will ensure that they retain many important properties of
fibres in $\mathcal{S}$.

\begin{Defn}
  \label{def:operadic-category}\cite{Batanin2015Operadic}
  An \emph{operadic category} is given by the following data:
  \begin{enumerate}[label=(D\tstyle{\arabic*})]
  \item \label{data:operadic-Q1} A category $\C$ endowed with local
    terminal objects;
    \item \label{data:operadic-Q2} A \emph{cardinality functor}
      $\abs{\thg} \colon \C \to \S$;
    \item \label{data:operadic-Q3} For each object $X\in \C$ and each
      $i \in \abs{X}$ a \emph{fibre functor}
      $$\fibre_{X,i} \colon \C/X \to \C$$
      whose action on objects and morphisms we denote as follows:
      \begin{align*}
        \cd[@C1.3em]{
          Y \ar[rr]^-{f} && X
        } \qquad &\mapsto \qquad f^{-1}(i)\\
        \cd[@C1em@R-0.7em]{{Z} \ar[rr]^-{g} \ar[dr]_-{fg} & &
          {Y} \ar[dl]^-{f} \\ &
          {X}} \qquad &\mapsto \qquad
        g^f_{i} \colon (fg)^{-1}(i) \to f^{-1}(i)\rlap{ ,}
      \end{align*}
      referring to the object $f^{-1}(i)$ as the \emph{fibre
      of $f$ at $i$}, and the morphism
    $g^f_{i} \colon (fg)^{-1}(i) \to f^{-1}(i)$ as the \emph{fibre
      map of $g$ with respect to $f$ at~$i$};
  \end{enumerate}
  all subject to the following axioms, where
  in~\ref{axQ:BM-fibres-of-local-fibres}, we write $\varepsilon j$
  for the image of $j \in {\abs f}^{-1}(i)$ under the map
  $\varepsilon_{\abs f, i} \colon {\abs f}^{-1}(i) \rightarrow \abs Y$
  of~\eqref{eq:10}:

  \begin{enumerate}[label=(A\tstyle{\arabic*})]
  \item \label{axQ:BM-abs(lt)} If $X$ is a local terminal then
    $\abs{X}=\underline 1$;
  \item \label{axQ:BM-fibres-of-identities} For all $X \in \C$ and
    $i \in \abs X$, the object $(1_X)^{-1}(i)$ is chosen terminal;
  \item \label{axQ:BM-67} For all $f \in \C / X$ and $i \in \abs X$,
    one has $\abs{\smash{f^{-1}(i)}} = {\abs f}^{-1}(i)$, while for
    all $g \colon fg \rightarrow f$ in $\C / X$ and $i \in \abs X$,
    one has $\abs{\smash{g^f_{i}}} = \smash{\abs{g}^{\abs f}_{i}}$;
  \item \label{axQ:BM-fibres-of-tau-maps} For $X \in \C$, one has
    $\tau_X^{-1}(\ast) = X$, and for $f \colon Y \to X$, one has
    $f^{\tau_X}_\ast = f$;
  \item \label{axQ:BM-fibres-of-local-fibres} For
    $g \colon fg \rightarrow f$ in $\C/X$, $i \in \abs X$ and
    $j \in \abs{f}^{-1}(i)$, one has that
    $(g^f_i)^{-1}(j) = g^{-1}(\varepsilon j)$, and given also
    $h \colon fgh \rightarrow fg$ in $\C / X$, one has
    $(h^{fg}_i)^{g^f_i}_{j} = h^g_{\varepsilon j}$.
  \end{enumerate}
  
  A functor $F \colon \C \rightarrow \someothercat$ between operadic
  categories is called an \emph{operadic functor} if it strictly
  preserves local terminal objects, strictly commutes with the
  cardinality functors to $\S$, and preserves fibres and fibre maps in
  the sense that
  \begin{equation*}
    F(f^{-1}(i)) = (Ff)^{-1}(i) \qquad \text{and} \qquad F(g^f_{i}) =
    (Fg)^{Ff}_{i}
  \end{equation*}
  for all $g \colon fg \rightarrow f$ in $\C/X$ and $i \in \abs X$. We
  write $\cat{Op\C at}$ for the category of operadic categories and
  operadic functors.
\end{Defn}

The preceding definitions are exactly those
of~\cite{Batanin2015Operadic} with only some minor notational changes
for clarity. The most substantial of these is that we make explicit
the use of the monotone injections~\eqref{eq:10} in the
axiom~\ref{axQ:BM-fibres-of-local-fibres}, whereas
in~\cite{Batanin2015Operadic} this is left implicit. In light of this,
let us spend a moment doing the necessary type-checking to see that
this axiom makes sense.

Intuitively, the first clause of \ref{axQ:BM-fibres-of-local-fibres}
identifies the fibres of the fibre maps of a map, with the fibres of
that map. Therein we have $g^f_i \colon (fg)^{-1}(i) \to f^{-1}(i)$,
and $j \in {\abs f}^{-1}(i) = \abs{\smash{f^{-1}(i)}}$, so that one
can consider the object $(g^f_i)^{-1}(j)$. On the other hand,
we have $\varepsilon j \in \abs Y$ and $g \colon Z \rightarrow Y$ and
so can equally consider the object $g^{-1}(\varepsilon j)$; now the
first part of \ref{axQ:BM-fibres-of-local-fibres} states that these
two are equal.

As for the second part of \ref{axQ:BM-fibres-of-local-fibres},
this says that the fibre maps of the fibre maps of a map, are
themselves fibre maps of that map. In this case, functoriality of the
fibre functor $\varphi_{X,i} \colon \C / X \rightarrow \C$ implies
that we have an equality
\begin{equation*}
  (fgh)^{-1}(i) \xrightarrow{(gh)^f_i} f^{-1}(i) \quad = \quad (fgh)^{-1}(i) \xrightarrow{h^{fg}_i} (fg)^{-1}(i)
  \xrightarrow{g^f_i} f^{-1}(i)
\end{equation*}
and again we have $j \in \abs{f}^{-1}(i) = \abs{f^{-1}(i)}$.
It follows that the fibre map of $h_{i}^{fg}$ with respect to $g_i^f$
at $j$ is given as to the left in:
\begin{equation*}
  (h^{fg}_i)^{g^f_i}_{j} \colon ((gh)^f_i)^{-1}(j)
  \rightarrow (g^f_i)^{-1}(j) \qquad \quad h^g_{\varepsilon j} \colon
  (gh)^{-1}(\varepsilon j) \rightarrow g^{-1}(\varepsilon j)\rlap{ .}
\end{equation*}
On the other hand, one could just consider the fibre map of $h$ with
respect to $g$ at $\varepsilon j \in \abs Y$, as to the right. The
first part of~\ref{axQ:BM-fibres-of-local-fibres} assures us that the
domains and codomains of these maps coincide, and now the second part
asserts that the maps themselves are equal.

\begin{Ex}
  \label{ex:1}
  The most basic example of an operadic category is $\S$ itself. The
  choice of local terminals is the unique one, the cardinality functor
  is the identity, and the action of the fibre functors is defined as
  in~\eqref{eq:10} and~\eqref{eq:11}.
\end{Ex}

Many more examples of operadic categories are discussed
in~\cite{Batanin2015Operadic}; we give here two new examples inspired
by probability theory.

\begin{Ex}
  \label{ex:3}
  Let $\C$ be the category of finite sub-probability spaces. Its
  objects are lists $r = (r_1, \dots, r_n)$ where each $r_i \in [0,1]$
  and $\Sigma_i r_i \leqslant 1$; its maps
  $\varphi \colon (s_1, \dots, s_m) \rightarrow (r_1, \dots, r_n)$ are
  maps $\varphi \colon \underline m \rightarrow \underline n$ of $\S$
  such that $r_i = \Sigma_{j \in \varphi^{-1}(i)} s_i$. There is an
  obvious cardinality functor $\abs{\thg} \colon \C \rightarrow \S$,
  and a \emph{unique} choice of local terminals: indeed, $\C$ is a
  coproduct of categories $\C = \Sigma_{r \in [0,1]} \C_r$ where
  $\C_r$ has the unique terminal object $(r)$. For the abstract
  fibres, given $\varphi \colon s \rightarrow r$ in $\C$ and
  $i \in \abs{r}$, we define $\varphi^{-1}(i)$ to be
  $(s_{\varepsilon 1}, \dots, s_{\varepsilon k})$ where
  $k = \abs{\varphi}^{-1}(i)$ and
  $\varepsilon = \varepsilon_{\abs{\varphi},i} \colon
  \abs{\varphi}^{-1}(i) \rightarrow \abs{s}$ is as in~\eqref{eq:10};
  finally, fibre maps in $\C$ are as in~$\S$.
\end{Ex}

\begin{Ex}
  \label{ex:5}
  Let $\C_1$ be the category of finite probability spaces, i.e., the
  connected component of $(1)$ in the category $\C$ of the previous
  example. This subcategory of $\C$ is not a sub-operadic category;
  however, it bears a different operadic category structure which
  describes \emph{disintegration} of finite probability measures.

  We begin with the cardinality functor
  $\abs{\thg}_1 \colon \C_1 \rightarrow \S$. For any
  $r = (r_1, \dots, r_n) \in \C_1$, we let $(r_{p_1}, \dots, r_{p_k})$
  be the sublist of $r$ obtained by deleting all zeroes, and now take
  $\abs{r}_1 = \underline k$. Given a map
  $\varphi \colon s \rightarrow r$ in $\C_1$, where $s$ has sublist
  $(s_{q_1}, \dots, s_{q_\ell})$ of non-zero entries, we determine
  $\abs{\varphi}_1 \colon \abs{s}_1 \rightarrow \abs{r}_1$ by
  requiring that $\abs{\varphi}(q_j) = p_{\abs{\varphi}_1(j)}$; i.e.,
  $\abs{\varphi}_1$ is the restriction of $\abs{\varphi}$ to the
  indices of non-zero entries.

  To define the $\C_1$-fibres, we employ the \emph{normalisation} of a
  non-zero sub-probability space
  $r = (r_1, \dots, r_n) \in \C \setminus \C_0$; this is the
  probability space $\overline r \in \C_1$ with
  $\overline r = (r_1 / \Sigma_i r_i, \dots, r_n / \Sigma_i r_i)$. Now
  given $\varphi \colon s \rightarrow r$ in $\C_1$ and
  $i \in \abs{r}_1$, we define the $\C_1$-fibre $\varphi^{-1}(i)$ to
  be the normalisation of the $i$th non-zero $\C$-fibre
  $\varphi^{-1}(p_i)$. Note that we \emph{cannot} normalise a
  $\C$-fibre $\varphi^{-1}(j)$ for which $r_j = 0$; this is why we had
  to remove such $j$ in defining the cardinality functor
  $\abs{\thg}_1$. Finally, given also $\psi \colon t \rightarrow s$ in
  $\C_1$, we define the $\C_1$-fibre map $\psi^{\varphi}_i$ to have
  underlying $\S$-map $\psi^{\varphi}_{p_i}$.
\end{Ex}

\section{D\'ecalage}
\label{sec:decalage}

In this section, we recall the d\'ecalage comonad on $\cat{Cat}$,
characterise its category of coalgebras, and explain how this links up
with the notion of operadic category. Throughout, ``d\'ecalage'' 
will always mean \emph{upper} d\'ecalage.

The d\'ecalage comonad on $\cat{Cat}$ can be obtained as a restriction
of Illusie's d\'ecalage comonad~\cite{Illusie1972Complexe} on
$[\Delta^\mathrm{op}, \cat{Set}]$, the category of simplicial sets.
This is, in turn, obtained from the monad
$\mathsf{T} = (T, \eta, \mu)$ on the category $\Delta$ of non-empty
finite ordinals and monotone maps given by freely adjoining a top
element. In terms of the usual presentation of $\Delta$ in terms of
``coface'' maps $\delta_i$ and ``codegeneracy'' maps $\sigma_j$, this
monad is given by the data:
\begin{equation*}
  {T[n] = [n+1]}\text{,} \qquad
  {T\delta_i = \delta_i}\text{,} \qquad
  {T\sigma_j = \sigma_j}\text{,} \qquad
  {\eta_{[n]} = \delta_{n+1}}\ \ \text{ and }\ \ 
  {\mu_{[n]} = \sigma_{n+1}}\rlap{ .}
\end{equation*}
It follows that $\mathsf{T}^\mathrm{op}$ is a \emph{co}monad on
$\Delta^\mathrm{op}$, so that precomposition with
$\mathsf{T}^\mathrm{op}$ is a comonad on
$[\Delta^\mathrm{op}, \cat{Set}]$; this is the d\'ecalage comonad.

The classical \emph{nerve} functor
$\mathrm{N} \colon \cat{Cat} \rightarrow [\Delta^\mathrm{op},
\cat{Set}]$ exhibits the category of small categories as equivalent to
a full subcategory of simplicial sets. The simplicial sets in this
full subcategory happen to be closed under the action of the
d\'ecalage comonad, which thereby restricts to a comonad $\mathsf{D}$
on $\cat{Cat}$. The underlying endofunctor $D$ of this comonad sends a
category $\C$ to the coproduct of its slices:
\begin{equation}\label{eq:DecCoprodSlices}
  D(\C) = \textstyle \sum_{X \in \C} \C / X\rlap{ ;}
\end{equation}
the counit $\varepsilon_\C \colon D(\C) \rightarrow \C$ is the
\emph{copairing} of the domain projections $\C / X \rightarrow \C$
from the slices (i.e., the map induced from the family of domain
projections by the universal property of coproduct); while the
comultiplication $\delta_\C \colon D(\C) \rightarrow DD(\C)$, which is
a functor
\begin{equation*}
  \delta_\C \colon \textstyle \sum_{X \in \C} \C / X \rightarrow
  \sum_{f \in D(\C)} D(\C) / f\rlap{ ,}
\end{equation*}
sends the $X$-summand to the $1_X$-summand via the isomorphism
$\C / X \rightarrow D(\C) / 1_X$.

We now characterise the category of coalgebras for the d\'ecalage
comonad as the category $\cat{Cat}_{\ell t}$  whose objects
are small categories endowed with local terminal objects, and whose
morphisms are functors which preserve chosen local terminals.

\begin{Prop}\label{prop:D-CoAlg}
  The category $\cat{Cat}^\mathsf{D}$ of $\mathsf{D}$-coalgebras is
  isomorphic to $\cat{Cat}_{\ell t}$ over $\cat{Cat}$. Under this
  isomorphism, the $\mathsf{D}$-coalgebra structure on
  $\C \in \cat{Cat}_{\ell t}$ is given by the functor
  $\tau \colon \C\to D(\C)$ which takes $X \in \C$ to
  $\tau_X \colon X \to uX \in D(\C)$.
\end{Prop}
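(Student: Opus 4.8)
The plan is to unwind the notion of $\mathsf{D}$-coalgebra using the explicit descriptions of the endofunctor $D$, the counit $\varepsilon$ and the comultiplication $\delta$ recalled just above, and then to match the resulting data, item by item, with a choice of local terminal objects. First I would analyse the structure map. A coalgebra structure $\gamma \colon \C \to D(\C)$ is a functor into the coproduct $\sum_{X \in \C} \C/X$; since any connected subcategory of a coproduct of categories lies inside a single summand, $\gamma$ carries each connected component of $\C$ into a single slice $\C/A$. Because the counit $\varepsilon_\C$ reads off the domain of an object of $D(\C)$ (and the domain of a morphism), the counit law $\varepsilon_\C\gamma = 1_\C$ forces $\gamma(X)$ to be an arrow $a_X \colon X \to A$ with domain $X$ --- so $A$ automatically lies in the component of $X$ --- and forces $\gamma(f)$, for $f \colon X \to Y$, to be the morphism of $\C/A$ whose underlying arrow is $f$. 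Functoriality of $\gamma$ then holds automatically, and we conclude that the datum of $\gamma$ amounts to: a choice of object $A$ in each connected component, together with arrows $a_X \colon X \to A$ for every $X$ in that component, subject to $a_Y \circ f = a_X$ for all $f \colon X \to Y$.

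Next I would impose the coassociativity law $\delta_\C\,\gamma = D(\gamma)\,\gamma$. Evaluated at an object $X$ lying in the $A$-component, the left-hand side $\delta_\C(a_X)$ lies in the $1_A$-summand of $DD(\C)$, since $\delta_\C$ sends the $A$-summand of $D(\C)$ into the $1_A$-summand; whereas the right-hand side $D(\gamma)(a_X) = \gamma(a_X)$ lies in the $\gamma(A)$-summand, since $D(\gamma)$ sends the $A$-summand of $D(\C)$ into the $\gamma(A)$-summand. Two objects in distinct summands of a coproduct cannot be equal, so the law forces $\gamma(A) = 1_A$, that is $a_A = 1_A$; and, conversely, once $a_A = 1_A$ holds the two summands coincide and both sides are visibly the same object. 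Hence a $\mathsf{D}$-coalgebra structure on $\C$ is precisely the data of the previous paragraph together with the single extra equation $a_A = 1_A$ in each component.

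It remains to recognise this data as a choice of local terminals, and to handle morphisms. Given such data, any arrow $g \colon Y \to A$ with $Y$ in the $A$-component satisfies $g = a_A \circ g = a_Y$, using naturality and $a_A = 1_A$; so $A$ is terminal in its component and $a_Y$ is the unique arrow $Y \to A$. Thus $\gamma$ is exactly the coalgebra structure $uX := A$, $\tau_X := a_X$ asserted in the statement. Conversely, a choice of local terminals $(uX,\tau_X)$ plainly provides such data, the equations $\tau_Y \circ f = \tau_X$ and $\tau_{uX} = 1_{uX}$ holding by uniqueness of maps into a terminal object. For a functor $F \colon \C \to \D$ between objects of $\cat{Cat}_{\ell t}$, comparing the summands into which the two sides of $\gamma_\D\,F = D(F)\,\gamma_\C$ fall shows that $F$ is a coalgebra morphism if and only if $F(u_\C X) = u_\D(FX)$ for every $X$ --- i.e. if and only if $F$ preserves the chosen local terminals --- with the resulting identity $F\tau^\C_X = \tau^\D_{FX}$ then automatic by uniqueness. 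Since the correspondence leaves the underlying category and functor untouched, it is an isomorphism over $\cat{Cat}$.

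I expect the only real (and still quite mild) obstacle to be the summand-bookkeeping in the coassociativity step: one has to keep straight that $\delta_\C$ places the $A$-summand of $D(\C)$ into the $1_A$-summand of $DD(\C)$ whereas $D(\gamma)$ places it into the $\gamma(A)$-summand, because it is precisely the collision of these two prescriptions that pins down $a_A = 1_A$; the remainder is reading off definitions.
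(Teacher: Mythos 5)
Your argument is correct: the summand-bookkeeping is done carefully, the counit law does pin down $\gamma(X)$ as an arrow $a_X\colon X\to A$ with $a_Y\circ f=a_X$, the coassociativity law does reduce to the single equation $a_A=1_A$ per component, and the deduction $g=a_A\circ g=a_Y$ correctly identifies $A$ as terminal in its component; the treatment of morphisms is likewise right. However, your route is genuinely different from the paper's. You unwind the coalgebra axioms directly against the explicit formulas for $\varepsilon_\C$ and $\delta_\C$, whereas the paper never touches individual coalgebras: it shows that the forgetful functor $U\colon \cat{Cat}_{\ell t}\to\cat{Cat}$ is strictly comonadic by constructing its right adjoint $R$ (with $R\C = D(\C)$ endowed with the chosen terminals $1_X$, and the universal factorisation $F = \varepsilon_\C\circ G$ where $GX = F\tau_X$) and invoking the Beck theorem, then checks $UR\cong\mathsf D$. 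Your approach is more elementary and self-contained, and it delivers the explicit form of the structure map $\tau\colon\C\to D(\C)$ asserted in the statement somewhat more directly. The paper's approach buys something it needs later: an explicit description of the forgetful--cofree adjunction $\cat{Cat}^{\mathsf D}\leftrightarrows\cat{Cat}$, from which the unit and multiplication of the induced monad $\mathsf{\tilde D}$ are read off in Section~\ref{sec:line-oper-categ}; if you follow your route, that adjunction description would still have to be extracted afterwards (though your characterisation of the coalgebras makes the cofree object evident). The only points you elide are the morphism component of the coassociativity check and the easy verification that the prescribed $\gamma$ is indeed functorial into the slice, both of which are immediate.
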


\begin{proof}
  It suffices to show that the forgetful functor
  $U \colon \cat{Cat}_{\ell t} \rightarrow \cat{Cat}$ is strictly
  comonadic, and that the induced comonad is isomorphic to
  $\mathsf{D}$. Towards the first of these, it is clear that $U$
  strictly creates limits and is faithful, and so by the Beck theorem
  will be strictly comonadic so long as it has a right adjoint.

  We can endow the category $D(\C)$ with the chosen terminal object
  $1_X$ in each connected component $\C / X$, so making it into an
  object of $\cat{Cat}_{\ell t}$; we claim this gives the value at
  $\C$ of the desired right adjoint. Thus, for any
  $\B \in \cat{Cat}_{\ell t}$ and functor
  $F \colon \B \rightarrow \C$, we must exhibit a unique factorisation
  \begin{equation}\label{eq:3}
    F = \B \xrightarrow{G} D(\C) \xrightarrow{\varepsilon_\C} \C
  \end{equation}
  where $G$ strictly preserves chosen local terminals. Such a $G$ must
  send each object $X \in \C$ to an object of $D(\C)$ with domain
  projection $FX$. In particular, each chosen terminal $uX$ of $\B$
  must be sent to a chosen terminal of $D(\C)$ with domain $FuX$, and
  so we must have $G(uX) = 1_{FuX}$. Furthermore, such a $G$, if it
  exists, must send each map $f \colon Y \rightarrow X$ of $\B$ to a
  map in $D(\C)$ as to the left in:
  \begin{equation*}
    \cd[@!C@C-1em]{
      {FY} \ar[rr]^-{Ff} \ar[dr]_-{GY\,} & &
      {FX} \ar[dl]^-{\, GX} &&
      {FY} \ar[rr]^-{F\tau_X} \ar[dr]_-{GX} & &
      {FuX}\rlap{ .} \ar[dl]^-{\, 1_{FuX}} \\ &
      {\bullet} & & & & 
      {FuX}
    }
  \end{equation*}
  In particular, taking $f = \tau_X$ yields the commuting triangle to
  the right, so that on objects we must have
  $GX = (F\tau_X \colon FX \rightarrow FuX)$. So $G$ is unique if it
  exists; but it easy to see that defining $G$ in this way does indeed
  yield a map $G \colon \B \rightarrow D(\C)$ in $\cat{Cat}_{\ell t}$
  preserving chosen terminals and factorising~\eqref{eq:3} as
  required.

  So $U \colon \cat{Cat}_{\ell t} \rightarrow \cat{Cat}$ has a right
  adjoint $R$, and by strict comonadicity, $\cat{Cat}_{\ell t}$ is
  isomorphic to the category of $UR$-coalgebras. By construction, the
  underlying functor and counit of $UR$ are \emph{equal} to $D$ and
  $\varepsilon$, while the comultiplication at $\C$ is the unique
  factorisation~\eqref{eq:3} of
  $F = 1_{D(\C)} \colon D(\C) \rightarrow D(\C)$ through a map in
  $\cat{Cat}_{\ell t}$. As $\delta_\C \colon D(\C) \rightarrow DD(\C)$
  is easily seen to be such a factorisation, we conclude that
  $\mathsf{D} = UR$ and so
  $\cat{Cat}_{\ell t} \cong \cat{Cat}^\mathsf{D}$ as required.
\end{proof}

To motivate the developments which will follow, we now establish a
first link between operadic categories and d\'ecalage, by showing how
the data and axioms for an operadic category can be partially
re-expressed in terms of structure in
$\cat{Cat}_{\ell t} \cong \cat{Cat}^\mathsf{D}$. Of
course,~\ref{data:operadic-Q1} asserts that $\C$ is an object in
$\cat{Cat}_{\ell t}$, whereupon axiom~\ref{axQ:BM-abs(lt)} asserts
that the cardinality functor $\abs{\thg} \colon \C \rightarrow \S$ is
a map therein. Similarly, axiom~\ref{axQ:BM-fibres-of-identities}
states that each functor $\varphi_{X,i} \colon \C / X \rightarrow \C$
is a map of $\cat{Cat}_{\ell t}$, where we take the chosen (local)
terminal object in $\C / X$ to be the identity $1_X$.

To express~\ref{axQ:BM-67}, we define for each $X \in \C$ and
$i \in \abs X$ a cardinality functor
$\abs{\thg}_{X,i} \colon \C / X \rightarrow \S$ as the composite of
$\abs{\thg}/X \colon \C / X \rightarrow \S / \abs{X}$ with the fibre
functor $\varphi_{\abs X, i} \colon \S / \abs X \rightarrow \S$ of the
operadic category $\S$; thus, on objects,
$\abs{f}_{X,i} = \smash{\abs{f}^{-1}(i)}$.
Now~\ref{axQ:BM-67} asserts that the following diagram commutes for
all $X \in \C$, $i \in \abs X$:
\begin{equation}\label{axiomstriangle}
  \cd[@!C@C-2em@R-0.8em]{
    \C/X \ar[rr]^{\fibre_{X,i}} \ar[rd]_{\abs{\thg}_{X,i}} &&
    \C \rlap{ .}\ar[ld]^{\abs{\thg}} \\
    & \mathcal{S} &
  }
\end{equation}

We may express all of the above more compactly as follows. For any
object $\abs{\thg}_\C \colon \C \rightarrow \S$ of
$\cat{Cat}_{\ell t} / \S$, we write $D_m(\C)$ for the category
$\Sigma_{X \in \C, i \in \abs X} \C / X$, seen as an object of
$\cat{Cat}_{\ell t}$ by choosing each identity map as a local
terminal, and write
$\smash{\abs{\thg}_{D_m(\C)}} \colon \smash{D_m(\C)} \rightarrow \S$
for the copairing of the maps
$\abs{\thg}_{X,i} \colon \C / X \rightarrow \S$. Now to give the
data~\ref{data:operadic-Q1}--\ref{data:operadic-Q3} and axioms
\ref{axQ:BM-abs(lt)}--\ref{axQ:BM-67} for an operadic category is to
give an object $(\C, \abs{\thg}_\C)$ of $\cat{Cat}_{\ell t} / \S$ and
a map
$\varphi \colon (D_m(\C), \abs{\thg}_{D_m(\C)}) \rightarrow (\C,
\abs{\thg}_{\C})$.

It remains to account for axioms~\ref{axQ:BM-fibres-of-tau-maps}
and~\ref{axQ:BM-fibres-of-local-fibres}. In fact, it turns out that
the assignation
$\smash{(\C, \abs{\thg}_\C) \mapsto (D_m(\C), \abs{\thg}_{D_m(\C)})}$
is the action on objects of a monad $\mathsf{\tilde D}_m$ on the
category $\cat{Cat}_{\ell t} / \S$, and that the remaining axioms are
just those needed for
$\varphi \colon (D_m(\C), \abs{\thg}_{D_m(\C)}) \rightarrow (\C,
\abs{\thg}_{\C})$ to endow $(\C, \abs{\thg})$ with
$\mathsf{\tilde D}_m$-algebra structure. While we could verify this
straight away in a hands-on fashion, we prefer to give an argument
which justifies the constructions in terms of a deeper link to the
d\'ecalage construction. In the end,
the claimed monad structure on $\mathsf{\tilde D}_m$ will be exhibited
in Definition~\ref{def:1} below, and the characterisation of its
algebras as operadic categories given in Theorem~\ref{thm:4}.


\section{Characterising unary operadic categories}
\label{sec:line-oper-categ}

The characterisation of general operadic categories in terms of
d\'ecalage will require a modification of the d\'ecalage construction,
to be introduced in Section~\ref{sec:modified-decalage} below. As a
warm-up for this, we consider the case of \emph{unary} operadic
categories, for which the usual d\'ecalage will suffice.

\begin{Defn}
  \label{def:2}
  An operadic category is \emph{unary} if $\abs X = \underline 1$ for
  all $X \in \C$. We write $\cat{Op\C at}_1$ for the category of unary
  operadic categories and operadic functors.
\end{Defn}

\begin{Ex}
  \label{ex:2}
  For any category $\C$, the category $D(\C) = \sum_{X\in \C}\C/X$ is a unary
  operadic category. The chosen local terminals are the identity maps,
  and the unique fibre of a map $g \colon fg \rightarrow f$ is the
  object $g$. Given another map $h \colon fgh \rightarrow fg$, the
  fibre map of $h$ with respect to $g$ at $\ast$ is taken to be
  $h \colon gh \rightarrow g$. 
\end{Ex}

\begin{Ex}
  \label{ex:4}
  If $\C$ is a pointed category with a chosen zero object and chosen
  kernels, we can attempt to impose a unary operadic structure as
  follows: the chosen (local) terminal is the zero object; the unique
  fibre of a map $f \colon Y \rightarrow X$ is its kernel; and the
  fibre map of $g \colon Z \rightarrow Y$ with respect to $f$ is the
  restriction $\res g {\ker{fg}} \colon \ker{fg} \rightarrow \ker{f}$.
  However, whether these data satify the required axioms is sensitive
  to the choice of kernels. For instance, if
  $g \colon Z \rightarrow Y$ and $f \colon Y \rightarrow X$, then the
  chosen kernel of $g$, though always \emph{isomorphic} to the chosen
  kernel of $\res g{\ker{fg}} \colon \ker{fg} \rightarrow \ker{f}$,
  need not be \emph{equal} to it as required by
  axiom~\ref{axQ:BM-fibres-of-local-fibres}.

  Often, there \emph{is} an appropriate choice of kernels; for example
  if $\C$ is $\cat{Set}_\ast$ or $\cat{Ab}$ or $k\text-\cat{Vect}$ or
  $\cat{Ch}(R\text-\cat{Mod})$, then we can take the kernel of any
  identity map to be the chosen zero object, and the kernel of any
  other map to be given by the usual subset formula; this yields the
  necessary axioms for a unary operadic category.

  Yet even for a $\C$ where we cannot choose kernels appropriately, we
  can always consider the \emph{equivalent} category
  $\cat{Pt}(\C^\mathrm{op}, \cat{Set}_\ast)_{\mathrm{rep}}$ of
  representable zero-preserving functors to $\cat{Set}_\ast$, and
  endow this with unary operadic structure given pointwise as in
  $\cat{Set}_\ast$. Note that this structure need \emph{not} transport
  back to an operadic structure on $\C$, since the notion of operadic
  category is not invariant under equivalence (in the terminology
  of~\cite{Blackwell1989Two-dimensional} it is not \emph{flexible}).
\end{Ex}

In the unary case, we can effectively ignore the cardinality functor
down to $\S$; so on repeating the analysis at the end of the preceding
section, we find that the data and first three axioms for a unary
operadic category $\C$ are encoded precisely by a map
$D(\C) \rightarrow \C$ in $\cat{Cat}_{\ell t}$. To complete this
analysis, we will show that the assignation $\C \mapsto D(\C)$
underlies a monad on $\cat{Cat}_{\ell t}$ whose category of algebras
is isomorphic to $\cat{Op\C at}_1$. The monad structure arises as follows.


\begin{Defn}
  \label{def:4}
  The \emph{d\'ecalage monad}
  $\mathsf{\tilde D} = (\tilde D, \eta, \mu)$ on $\cat{Cat}_{\ell t}
  \cong \cat{Cat}^\mathsf{D}$
  is the monad induced by the forgetful--cofree adjunction
  $\cat{Cat}^\mathsf{D} \leftrightarrows \cat{Cat}$.
\end{Defn}

Since the proof of Proposition~\ref{prop:D-CoAlg} furnishes us with an
explicit description of the forgetful--cofree adjunction
$\cat{Cat}^\mathsf{D} \leftrightarrows \cat{Cat}$, we can read off
from it the following description of the d\'ecalage monad:
\begin{enumerate}[(i),itemsep=0.25\baselineskip,widest=iii]
\item The underlying functor $\tilde D$ on objects sends $\C$ to
  $\sum_{X \in \C} \C / X$ endowed with the local terminal objects
  $1_X \in \C / X$; while on morphisms, it sends
  $F \colon \C \rightarrow \someothercat$ to the functor which maps
  the $X$-summand of $\sum_{X \in \C} \C / X$ to the $FX$-summand of
  $\sum_{Y \in \someothercat} \someothercat / Y$ via
  $F/X \colon \C / X \rightarrow \someothercat / FX$;
\item The unit map $\eta_\C \colon \C \rightarrow \tilde D(\C)$ is
  defined on objects by $\eta_\C(X) = \tau_X \colon X \rightarrow uX$
  and on morphisms by
  $\eta_\C(f \colon Y \rightarrow X) = f \colon \tau_Y \rightarrow
  \tau_X$;
\item The multiplication map
  $\mu_\C \colon \tilde D \tilde D (\C) \rightarrow \tilde D(\C)$,
  which is given by a functor
  $ \textstyle\sum_{f \in D(\C)} D(\C) / f \rightarrow \sum_{X \in \C}
  \C / X $, sends the summand indexed by $f \colon Y \rightarrow X$ to
  the summand indexed by $Y$ via the isomorphism
  $D(\C)/f \rightarrow \C / Y$.
\end{enumerate}

Using this description, we can now prove our first main theorem.

\begin{Thm}
  \label{thm:1}
  The category of algebras for the d\'ecalage monad
  $\mathsf{\tilde{D}}$ on
  $\cat{Cat}_{\ell t} \cong \cat{Cat}^\mathsf{D}$ is isomorphic to the
  category $\cat{Op \Cat}_1$ of unary operadic categories.
\end{Thm}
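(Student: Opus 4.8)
The plan is to construct a bijective correspondence between $\mathsf{\tilde D}$-algebra structures on an object $\C \in \cat{Cat}_{\ell t}$ and the extra data (fibre functors) and remaining axioms (\ref{axQ:BM-fibres-of-tau-maps} and \ref{axQ:BM-fibres-of-local-fibres}) making $\C$ into a unary operadic category, and then to check that this correspondence respects morphisms. As noted in the text preceding the theorem, an object of $\cat{Cat}_{\ell t}$ together with a map $a \colon D(\C) \to \C$ in $\cat{Cat}_{\ell t}$ is exactly the data \ref{data:operadic-Q1}, the unary version of \ref{data:operadic-Q3}, plus axiom \ref{axQ:BM-fibres-of-identities} (the remaining axioms \ref{axQ:BM-abs(lt)}, \ref{axQ:BM-67} being vacuous in the unary case). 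So the heart of the matter is: \emph{the map $a$ is an $\mathsf{\tilde D}$-algebra structure if and only if it additionally satisfies \ref{axQ:BM-fibres-of-tau-maps} and \ref{axQ:BM-fibres-of-local-fibres}.}

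First I would unwind the unit axiom $a \circ \eta_\C = 1_\C$. Using the explicit description of $\eta_\C$ from item (ii) above, $\eta_\C$ sends $X$ to $\tau_X$ and $f \colon Y \to X$ to $f \colon \tau_Y \to \tau_X$ viewed in the $X$-summand $\C/uX$; composing with $a$, whose restriction to the $uX$-summand $\C/uX$ is $\varphi_{uX,\ast}$, the unit law becomes precisely $\tau_X^{-1}(\ast) = X$ on objects and $f^{\tau_X}_\ast = f$ on morphisms. This is exactly axiom \ref{axQ:BM-fibres-of-tau-maps}. Next I would unwind the associativity axiom $a \circ \tilde D(a) = a \circ \mu_\C$, which is an equation between functors $\tilde D \tilde D(\C) = \sum_{f \in D(\C)} D(\C)/f \to \C$. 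On the summand indexed by $f \colon Y \to X$, the functor $\mu_\C$ is the canonical iso $D(\C)/f \cong \C/Y$ followed by $\varphi_{Y,\ast}$, while $\tilde D(a)$ maps that summand to the $a(f) = f^{-1}(\ast)$-summand of $D(\C)$ via $a/f$, whereupon $a$ applies $\varphi_{f^{-1}(\ast),\ast}$. Chasing an object $g \colon fg \to f$ of $D(\C)/f$ (equivalently $g \colon Z \to Y$ in $\C/Y$) through both sides yields $(g^f_\ast)^{-1}(\ast) = g^{-1}(\ast)$ (here $\varepsilon$ is identity since all cardinalities are $\underline 1$), and chasing a morphism $h \colon fgh \to fg$ yields $(h^{fg}_\ast)^{g^f_\ast}_\ast = h^g_\ast$; these are exactly the two clauses of \ref{axQ:BM-fibres-of-local-fibres}. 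I would spell out carefully that both sides are genuinely functors out of $\cat{Cat}_{\ell t}$ (preserving chosen terminals) so that the identifications of objects and morphisms above are all that is needed.

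Finally I would verify that an $\mathsf{\tilde D}$-algebra morphism is exactly an operadic functor. A morphism of algebras is a map $F \colon \C \to \someothercat$ in $\cat{Cat}_{\ell t}$ — i.e.\ a functor strictly preserving local terminals — such that $F \circ a_\C = a_{\someothercat} \circ \tilde D(F)$. Using item (i)'s description of $\tilde D(F)$ (sending the $X$-summand via $F/X$), this equation says exactly $F(f^{-1}(\ast)) = (Ff)^{-1}(\ast)$ and $F(g^f_\ast) = (Fg)^{Ff}_\ast$, which is the (unary instance of the) definition of operadic functor; conversely the cardinality condition is vacuous in the unary case. Putting these together gives the claimed isomorphism of categories, indeed an isomorphism over $\cat{Cat}$.

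The main obstacle is bookkeeping rather than conceptual: one must be scrupulous about the various coproduct-summand indexing isomorphisms (e.g.\ $\C/X \cong D(\C)/1_X$, $D(\C)/f \cong \C/Y$) and about which fibre functor $\varphi_{-,\ast}$ is being applied in each summand, so that the object/morphism chases on both sides of the associativity square line up exactly with the two clauses of \ref{axQ:BM-fibres-of-local-fibres}. A minor subtlety worth flagging explicitly is that in the unary setting the monotone injection $\varepsilon_{\abs f, \ast} \colon \abs f^{-1}(\ast) \to \abs Y$ of \eqref{eq:10} is an identity $\underline 1 \to \underline 1$, so $\varepsilon j = j$ and the $\varepsilon$'s in \ref{axQ:BM-fibres-of-local-fibres} simply disappear; this is what makes the ordinary d\'ecalage monad suffice here and is the precise point that will fail in the general case.
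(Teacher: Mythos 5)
Your proposal is correct and follows essentially the same route as the paper: the paper likewise encodes the data and first three axioms as a map $\varphi\colon \tilde D(\C)\to\C$ in $\cat{Cat}_{\ell t}$, identifies \ref{axQ:BM-fibres-of-tau-maps} with the unit law and \ref{axQ:BM-fibres-of-local-fibres} with the associativity law via the explicit description of $\mathsf{\tilde D}$, and notes the bijection on morphisms is direct. Your write-up merely carries out in detail the object/morphism chases that the paper leaves as "read off from Definition~\ref{def:4}".
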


\begin{proof}
  We have already argued that the data and first three axioms for
  a unary operadic category $\C$ are encapsulated by giving
  the object $\C \in \cat{Cat}_{\ell t}$ together with the map
  $\varphi \colon \tilde D(\C) \rightarrow \C$ in $\cat{Cat}_{\ell t}$
  obtained as the copairing of the fibre functors
  $\varphi_{X,\ast} \colon \C / X \rightarrow \C$.
  Given this, we can read off from Definition~\ref{def:4}
  that~\ref{axQ:BM-fibres-of-tau-maps} asserts precisely the unit
  axiom $\varphi \circ \eta_\C = 1_\C$, and
  that~\ref{axQ:BM-fibres-of-local-fibres} asserts the multiplication
  axiom
  $\varphi \circ \mu_\C = \varphi \circ \tilde D(\varphi) \colon
  \tilde D \tilde D(\C) \rightarrow \C$. So
  $\mathsf{\tilde D}$-algebras in $\cat{Cat}_{\ell t}$ are in
  bijection with unary operadic categories; the corresponding
  bijection on maps is direct.
\end{proof}

Using this result, we may obtain a further description of unary
operadic categories which, though not necessary for the subsequent
results of this paper, is nonetheless enlightening. We observed above
that the d\'ecalage comonad on $\cat{Cat}$ is the restriction along
the full inclusion
$\mathrm{N} \colon \cat{Cat} \rightarrow [\Delta^\mathrm{op},
\cat{Set}]$ of the d\'ecalage comonad on simplicial sets. It follows
that we have a full inclusion
\begin{equation}\label{eq:23}
  \cat{Op\C at} \xrightarrow{\cong}
  (\cat{Cat}^\mathsf{D})^{\mathsf{\tilde D}}
  \xrightarrow{(\mathrm{N}^\mathsf{D})^\mathsf{\tilde D}}
  ([\Delta^\mathrm{op}, \cat{Set}]^\mathsf{D})^{\mathsf{\tilde D}}
\end{equation}
(where we re-use the notation $\mathsf{D}$ and
$\mathsf{\tilde D}$ for the d\'ecalage comonad on
$[\Delta^\mathrm{op}, \cat{Set}]$ and the induced monad on
$[\Delta^\mathrm{op}, \cat{Set}]^\mathsf{D}$) whose essential image
comprises just those $\mathsf{\tilde D}$-algebras in
$[\Delta^\mathrm{op}, \cat{Set}]^\mathsf{D}$ whose underlying
simplicial set satisfies the Segal condition. On the other hand, we
have a straightforward characterisation of the category
$([\Delta^\mathrm{op}, \cat{Set}]^\mathsf{D})^{\mathsf{\tilde D}}$:

\begin{Lemma}
  \label{lem:3}
  The comparison functor
  \begin{equation*}
    [\Delta^\mathrm{op}, \cat{Set}] \rightarrow ([\Delta^\mathrm{op},
    \cat{Set}]^{\mathsf{D}})^{\mathsf{\tilde D}}
  \end{equation*}
  sending a simplicial set $X$ to $D(X)$ with its canonical
  $\mathsf{\tilde D}$-algebra structure, is an equivalence of categories.
\end{Lemma}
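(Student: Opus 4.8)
We want to show that the comparison functor $K\colon [\Delta^{\mathrm{op}}, \cat{Set}] \to ([\Delta^{\mathrm{op}}, \cat{Set}]^{\mathsf{D}})^{\mathsf{\tilde D}}$ is an equivalence. The plan is to recall the abstract categorical criterion for such a comparison functor to be an equivalence, and then verify its hypotheses for the décalage comonad on simplicial sets.

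The key abstract fact is the following: given a comonad $\mathsf{C}$ on a category $\A$, let $\mathsf{\tilde C}$ be the induced monad on $\A^{\mathsf{C}}$ coming from the forgetful–cofree adjunction $\A^{\mathsf{C}} \leftrightarrows \A$. Then there is a canonical comparison functor $\A \to (\A^{\mathsf{C}})^{\mathsf{\tilde C}}$ sending $A \mapsto C(A)$ with its free $\mathsf{\tilde C}$-algebra structure (equivalently, sending $A$ to the cofree coalgebra on $A$ viewed as an Eilenberg–Moore algebra for $\mathsf{\tilde C}$), and this comparison is an equivalence of categories if and only if $\A$ has equalisers of $\mathsf{C}$-split pairs and the cofree functor $\A \to \A^{\mathsf{C}}$ preserves them — that is, if and only if the cofree functor is comonadic in the appropriate dual-Beck sense.

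What I would actually write out: the dual of Beck's monadicity theorem says the forgetful functor $\A^{\mathsf{C}} \to \A$ is comonadic (always true) and — dualising once more — the cofree functor $F\colon \A \to \A^{\mathsf{C}}$ is *monadic* for the induced monad $\mathsf{\tilde C}$ precisely when $F$ has a left adjoint (it does: the forgetful functor), reflects isomorphisms (need to check), and $\A$ has and $F$ preserves coequalisers of $F$-split pairs. I'll verify these for the décalage comonad $\mathsf{D}$ on $[\Delta^{\mathrm{op}}, \cat{Set}]$. The forgetful functor $U\colon [\Delta^{\mathrm{op}}, \cat{Set}]^{\mathsf{D}} \to [\Delta^{\mathrm{op}}, \cat{Set}]$ is left adjoint to the cofree functor $D(\thg)$; it reflects isomorphisms since $\mathsf{D}$-coalgebra maps are just simplicial maps and $U$ is faithful and conservative. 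Since $[\Delta^{\mathrm{op}}, \cat{Set}]$ is a presheaf topos it is cocomplete, so all coequalisers exist; and $U$, being a left adjoint, preserves them. Thus by (the dual of) Beck's theorem the cofree functor is monadic, which is exactly the statement that $K$ is an equivalence.

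**The main obstacle.**

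The one genuinely content-bearing point — the rest being formal — is reflection of isomorphisms, i.e. that if $f\colon X \to Y$ is a simplicial map such that $D(f)\colon D(X) \to D(Y)$ is an isomorphism of $\mathsf{D}$-coalgebras, then $f$ is itself an isomorphism. Here the argument is: the counit $\varepsilon_X\colon D(X) \to X$ is (pointwise, in each simplicial degree) a split epimorphism — indeed $D(X)_n = X_{n+1}$ with $\varepsilon_X$ given by a coface operator, which is split by a codegeneracy — and this splitting is natural in $X$ among *all* simplicial maps. Consequently $X$ is a retract of $D(X)$ naturally in $X$, so $f$ is a retract of $D(f)$ in the arrow category; a retract of an isomorphism is an isomorphism. (Alternatively, and even more simply: $\varepsilon$ exhibits the identity as a retract of $UR \cong D$, so $U$ reflects isos whenever the underlying functor does, which it manifestly does since everything is computed levelwise in $\cat{Set}$.) Either way the verification is short. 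I would present the whole proof in roughly the shape: "By the dual Beck comonadicity theorem it suffices to check that the cofree functor $D\colon [\Delta^{\mathrm{op}}, \cat{Set}] \to [\Delta^{\mathrm{op}}, \cat{Set}]^{\mathsf{D}}$ is monadic; it has a left adjoint $U$; it reflects isomorphisms because $\varepsilon$ makes $1$ a retract of $D$; and coequalisers of $D$-split pairs exist and are preserved by $D$ because $D$ is a left adjoint onto a cocomplete category (being precomposition with $\mathsf{T}^{\mathrm{op}}$) — done." I'd also remark that this is purely formal and applies verbatim to $\cat{Cat}$ in place of $[\Delta^{\mathrm{op}}, \cat{Set}]$, except that $\cat{Cat}$-level décalage is not comonadic because $\varepsilon_{\C}\colon D(\C) \to \C$ is a bijective-on-objects functor that need not be split in $\cat{Cat}$ — which is precisely why the genuine content of Theorem~\ref{thm:1} lives at the level of categories and not simplicial sets.
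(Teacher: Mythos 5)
Your proof is correct and is essentially the paper's own argument: both apply Beck monadicity to the cofree functor $[\Delta^{\mathrm{op}},\cat{Set}] \rightarrow [\Delta^{\mathrm{op}},\cat{Set}]^{\mathsf{D}}$, obtaining conservativity from the observation that the set of $0$-simplices is a natural retract (splitting of an idempotent) of the set of $1$-simplices, and preservation of the requisite coequalisers from cocontinuity of $D$ as precomposition with $T^{\mathrm{op}}$. The only caveat concerns your closing aside: $\varepsilon_\C \colon D(\C) \rightarrow \C$ is surjective but not bijective on objects, and $D$ on $\cat{Cat}$ is in fact conservative (a functor bijective on morphisms is an isomorphism of categories), so the Beck hypothesis that fails for $\cat{Cat}$ is the coequaliser condition, $\C \mapsto \sum_{X \in \C} \C/X$ not being cocontinuous.
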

\begin{proof}
  The functor part of the comonad $\mathsf{D}$ on
  $[\Delta^\mathrm{op}, \cat{Set}]$ is given by precomposition with
  $T^\mathrm{op} \colon \Delta^\mathrm{op} \rightarrow
  \Delta^\mathrm{op}$, and so is cocontinuous. Thus, for the
  forgetful--cofree adjunction
  \begin{equation*}
    \cd{
      [\Delta^\mathrm{op}, \cat{Set}]^\mathsf{D} \ar@<-4.5pt>[r]_-{U^\mathsf{D}} \ar@{<-}@<4.5pt>[r]^-{G^\mathsf{D}} \ar@{}[r]|-{\top} &
      {[\Delta^\mathrm{op}, \cat{Set}]} 
    }
  \end{equation*}
  the functor $G^\mathsf{D}$ is again cocontinuous. Moreover,
  $U^\mathsf{D}$ is conservative, and it is easy to see that
  $U^\mathsf{D}G^\mathsf{D} = D$ is conservative---since the set of
  $0$-simplices of a simplicial set is the splitting of an idempotent
  on the set of $1$-simplices---so that $G^\mathsf{D}$ is also
  conservative. Thus by the Beck monadicity theorem 
  $G^\mathsf{D}$ is monadic, and so the comparison functor
  $[\Delta^\mathrm{op}, \cat{Set}] \rightarrow ([\Delta^\mathrm{op},
  \cat{Set}]^\mathsf{D})^{\mathsf{\tilde D}}$ is an equivalence.
\end{proof}

Combining this with the characterisation of the
essential image of~\eqref{eq:23} yields:
\begin{Cor}
  \label{cor:1}
  The category $\cat{Op\C at}_1$ of unary operadic categories is
  isomorphic to the full (reflective) subcategory of
  $[\Delta^\mathrm{op}, \cat{Set}]$ on those simplicial sets $C$ for
  which $D(C)$ satisfies the Segal condition.
\end{Cor}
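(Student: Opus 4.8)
The plan is to deduce the corollary by assembling three facts already established: the isomorphism $\cat{Op\C at}_1 \cong (\cat{Cat}^\mathsf{D})^{\mathsf{\tilde D}}$ of Theorem~\ref{thm:1}; the full embedding~\eqref{eq:23} together with the description of its essential image given in the discussion preceding Lemma~\ref{lem:3}; and the equivalence of Lemma~\ref{lem:3}. No new construction is needed --- the whole argument is the transport of one full subcategory through these comparisons.

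First I would use Theorem~\ref{thm:1} to replace $\cat{Op\C at}_1$ by $(\cat{Cat}^\mathsf{D})^{\mathsf{\tilde D}}$, and then~\eqref{eq:23} to present the latter as a full subcategory of $([\Delta^\mathrm{op}, \cat{Set}]^\mathsf{D})^{\mathsf{\tilde D}}$ whose essential image consists of exactly those $\mathsf{\tilde D}$-algebras whose underlying simplicial set satisfies the Segal condition. Next I would transport this full subcategory across the inverse of the equivalence $K \colon [\Delta^\mathrm{op}, \cat{Set}] \xrightarrow{\sim} ([\Delta^\mathrm{op}, \cat{Set}]^\mathsf{D})^{\mathsf{\tilde D}}$ of Lemma~\ref{lem:3}. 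The point here is that $K(X)$ is the cofree $\mathsf{D}$-coalgebra on $X$, namely $D(X)$, equipped with its canonical $\mathsf{\tilde D}$-algebra structure, so the simplicial set underlying $K(X)$ --- obtained by forgetting its coalgebra structure --- is simply $D(X)$. Hence the condition ``the underlying simplicial set satisfies Segal'' pulls back along $K$ to the condition ``$D(X)$ satisfies Segal'', and chaining the identifications exhibits $\cat{Op\C at}_1$ as the full subcategory of $[\Delta^\mathrm{op}, \cat{Set}]$ on those $X$ for which $D(X)$ satisfies the Segal condition.

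For the parenthetical reflectivity, I would note that this subcategory is $D^{-1}$ of the full subcategory $\cat{Seg}$ of simplicial sets satisfying the Segal condition, i.e.\ the essential image of the nerve, which is reflective and accessibly embedded in $[\Delta^\mathrm{op}, \cat{Set}]$. The d\'ecalage endofunctor $D = (T^\mathrm{op})^*$ is a precomposition functor, hence preserves all limits and has a left adjoint $(T^\mathrm{op})_!$; thus $\cat{Seg}$, being the orthogonality class of the spine inclusions $\mathrm{Sp}^n \hookrightarrow \Delta^n$, pulls back along $D$ to the orthogonality class of the small set of maps $(T^\mathrm{op})_!(\mathrm{Sp}^n \hookrightarrow \Delta^n)$, and is therefore reflective, small orthogonality classes in a presheaf category being reflective.

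The only point requiring genuine care --- and the one I would expect a careless write-up to get wrong --- is keeping straight which simplicial set carries the Segal condition: under $K$ it is not $X$ itself but its d\'ecalage $D(X)$ that underlies the associated $\mathsf{\tilde D}$-algebra, so the Segal condition must be read off there; writing $X$ in place of $D(X)$ would give a false statement. Everything else is routine bookkeeping with the three equivalences.
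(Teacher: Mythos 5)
Your argument is exactly the paper's: the corollary is obtained by combining the isomorphism of Theorem~\ref{thm:1}, the description of the essential image of the full inclusion~\eqref{eq:23}, and the equivalence of Lemma~\ref{lem:3}, with the key bookkeeping point (that the underlying simplicial set of $K(X)$ is $D(X)$, not $X$) handled correctly. Your orthogonality argument for the parenthetical reflectivity claim, which the paper leaves unjustified, is a sound addition.
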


Explicitly, the simplicial set $C$ giving the ``undecking'' of a unary
operadic category $\C$ has as $0$-simplices, the chosen terminal
objects of $\C$, and as $(n+1)$-simplices the $n$-simplices of the
nerve of $\C$. The faces of a $1$-simplex $X$ are 
\begin{equation*}
  \varphi(1_X) \xrightarrow{X} uX
\end{equation*}
where we write $\varphi(f)$ for the unique fibre $f^{-1}(\ast)$ of a
map $f \colon Y \rightarrow X$ of $\C$. The faces of a $2$-simplex
$f \in \C(Y,X)$ are given by
\begin{equation*}
  \cd[@!C@C-2.5em@R-0.5em]{
    & \varphi(1_X) \ar[dr]^-{X} \\
    \varphi(1_Y) \ar[ur]^-{\varphi(f)} \ar[rr]_-{Y} &
    \cell[0.4]{u}{f} & uX\rlap{ ;}
  }
\end{equation*}
while the faces of a $3$-simplex $(g,f) \in \C(Z,Y) \times \C(Y,X)$
are given by
\begin{equation*}
  \cd[@!C@C-2.6em]{
    & \varphi(1_Y) \cell[0.6]{dr}{g} \ar[drrr]|-{Y}
    \ar[rr]^-{\varphi(f)} & & \varphi(1_X) \cell[0.29]{dl}{f}
    \ar[dr]^-{X} \\
    \varphi(1_Z) \ar[ur]^-{\varphi(g)} \ar[rrrr]_-{Z} & & & &
    uX
  } \ \ \ 
  \cd[@!C@C-2.6em]{
    & \varphi(1_Y) \ar[rr]^-{\varphi(f)}
    \cell[0.3]{dr}{g^f_\ast} & & \varphi(1_X) \cell[0.55]{d}{fg}
    \ar[dr]^-{X} \\
    \varphi(1_Z) \ar[urrr]_-{\varphi(fg)} \ar[ur]^-{\varphi(g)} \ar[rrrr]_-{Z} & & & &
    uX\rlap{ .}
  }
\end{equation*}
The degeneracies are easily written down, and the remaining data is
determined by coskeletality. Note that $D(C)$ is the nerve of $\C$,
which satisfies the Segal condition. Conversely, if $C$ is a
simplicial set for which $D(C)$ satisfies the Segal condition, then
$D(C) \cong \mathrm{N}(\C)$ for a category $\C$, and by working backwards
through the above description we may read off the operadic structure
on $\C$.

\begin{Rk}
  \label{rk:4}
  The condition on a simplicial set $X$ that $D(X)$ should satisfy the
  Segal condition gives half of the axioms for a discrete
  \emph{decomposition space}~\cite{Galvez-Carrillo2018Decomposition}.
  (Decomposition spaces are also known as \emph{$2$-Segal
    spaces}~\cite{Dyckerhoff2012Higher}.) In particular, for any
  discrete decomposition space
  $X \colon \Delta^\mathrm{op} \rightarrow \cat{Set}$, its d\'ecalage
  is a unary operadic category, generalising Example~\ref{ex:2}. For
  example, there is a discrete decomposition space $X$ of
  (combinatorialists') graphs, wherein $X_n$ is the set of graphs with
  a map from the set of vertices to $\underline n$. The corresponding
  unary operadic category has graphs as objects; a map is the opposite
  of a full inclusion of graphs, and the fibre of such a map is the
  induced graph on the complementary set of vertices.

  In fact, the remaining axioms for a discrete decomposition space $X$
  can be expressed in terms of the associated unary operadic category
  $\C$: they say precisely that the fibre functor
  $\varphi \colon D(\C) \rightarrow \C$ is a discrete opfibration.
  This establishes a link with Lack's~\cite{Lack2018Operadic}, which
  characterises operadic categories with object set $O$ in terms of
  certain left-normal \emph{skew
    monoidal}~\cite{Szlachanyi2012Skew-monoidal} structures on
  $\cat{Set}/O$, and provides conditions for these skew structures to
  be genuinely monoidal; in the unary case, the necessary condition
  is, again, that $\varphi$ be a discrete opfibration.
  
  In the following result, the equivalence between (i) and (ii) is
  thus due to Lack; we omit the proof, since the result is not needed
  elsewhere in this paper.
  \begin{Thm*}
    Let $\C$ be a unary operadic category. The following are
    equivalent:
    \begin{enumerate}[(i)]
    \item The fibre functor $\varphi \colon D(\C) \rightarrow \C$ is a
      discrete opfibration;
    \item The associated skew monoidal structure on
      $\cat{Set}/{\mathrm{ob}\,\C}$ is genuinely monoidal;
    \item The ``undecking'' $C$ is a discrete decomposition space.
    \end{enumerate}
  \end{Thm*}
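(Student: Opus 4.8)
The plan is to establish the equivalence of (i) and (iii) directly, and to leave the equivalence of (i) and (ii) to Lack~\cite{Lack2018Operadic}, as indicated above: his correspondence between unary operadic categories on a fixed object set $O$ and left-normal skew-monoidal structures on $\cat{Set}/O$ restricts to a correspondence between those $\C$ for which $\varphi$ is a discrete opfibration and those skew structures that are genuinely monoidal, which is exactly (i) $\Leftrightarrow$ (ii). For (i) $\Leftrightarrow$ (iii) the idea is to bring both conditions to one and the same pullback square of sets. Recall from Corollary~\ref{cor:1} and the explicit description of the ``undecking'' given after it that a unary operadic category $\C$ determines a simplicial set $C$ with $C_0$ the set of chosen local terminals, $C_1 = \ob\C$, $C_2 = \mor\C$, and $C_{n+1} = \mathrm{N}(\C)_n$ in general; the face maps of $C$ not already present in $D(C) = \mathrm{N}(\C)$ encode the assignments $X \mapsto \varphi(1_X)$, $f \mapsto \varphi(f) = f^{-1}(\ast)$ and $(g,f) \mapsto g^f_\ast$. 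In particular $D(C) = \mathrm{N}(\C)$ is always a Segal space, and the counit $\varepsilon_C \colon D(C) \to C$ of the d\'ecalage comonad on simplicial sets is, in degrees $0$, $1$ and $2$, the maps $X \mapsto \varphi(1_X)$, $f \mapsto \varphi(f)$ and $(g,f) \mapsto g^f_\ast$.

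The first step is to invoke the characterisation of decomposition spaces via d\'ecalage~\cite{Galvez-Carrillo2018Decomposition}: a simplicial set $C$ for which $D(C)$ is a Segal space is a discrete decomposition space precisely when the counit $\varepsilon_C \colon D(C) \to C$ is \emph{culf}. Since $D(C)$ is already a Segal space, ``$\varepsilon_C$ is culf'' reduces to $\varepsilon_C$ being cartesian over the two generating active maps $d^1 \colon [1] \to [2]$ and $s^0 \colon [1] \to [0]$; and cartesianness over $s^0$---conservativity of $\varepsilon_C$---is forced by the $d^1$-square once one checks, using functoriality of the fibre functors, that that square makes $\varepsilon_C$ reflect identities. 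Unwinding the $d^1$-square with the description of $\varepsilon_C$ above, it asserts precisely that the map $C_3 \to C_2 \times_{C_1} C_2$, $(g,f) \mapsto (g^f_\ast,\, fg)$---the fibre product being formed over $f \mapsto \operatorname{dom}(f)$ and $f \mapsto \varphi(f)$---is a bijection.

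The second step is to recognise this bijection as the very definition of a discrete opfibration. Under the identifications $\mor(D(\C)) = \mathrm{N}(\C)_2 = C_3$ and $\ob(D(\C)) = \mor\C = C_2$, the fibre functor $\varphi \colon D(\C) \to \C$ acts on objects by $f \mapsto \varphi(f) = f^{-1}(\ast)$ and on morphisms by $(g,f) \mapsto g^f_\ast$, while the domain map of $D(\C)$ sends $(g,f)$ to $fg$; so the displayed map is none other than the canonical comparison $\mor(D(\C)) \to \ob(D(\C)) \times_{\ob\C} \mor\C$, which is a bijection exactly when $\varphi$ is a discrete opfibration. Putting the two steps together yields (i) $\Leftrightarrow$ (iii), and with it the theorem.

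I expect the main obstacle to be the first step---nailing down the reduction of ``discrete decomposition space'' to the single cartesian square over $d^1 \colon [1] \to [2]$, which is in essence the parenthetical claim already made in Remark~\ref{rk:4}. It is routine but somewhat delicate: one must make sure the characterisation of decomposition spaces through culf-ness of the d\'ecalage counit is applied in the correct variance, and one must verify that no active-codegeneracy square contributes a genuinely new constraint beyond the $d^1$-square (it does not: conservativity of $\varepsilon_C$ is already a consequence of that square). Once this bookkeeping is done, the second step is entirely transparent, and the three-way equivalence drops out.
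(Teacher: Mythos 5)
The paper offers no proof to compare against---it explicitly omits one, attributing (i)$\Leftrightarrow$(ii) to Lack and leaving (i)$\Leftrightarrow$(iii) as the assertion made just before the theorem in Remark~\ref{rk:4}---so your proposal must be judged on its own terms. Its architecture is the intended one: deferring (i)$\Leftrightarrow$(ii) to Lack is exactly what the paper does, and your identification of the $d^1$-cartesianness square for $\varepsilon_C$ with the canonical comparison $\mor(D(\C)) \to \ob(D(\C)) \times_{\ob\C} \mor\C$ witnessing that $\varphi$ is a discrete opfibration is correct; that really is the heart of the matter, and the characterisation of discrete decomposition spaces via Segality of the upper d\'ecalage together with culf-ness of its counit is legitimately available from~\cite{Galvez-Carrillo2018Decomposition}.

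There is, however, a genuine gap in the step you yourself flag as delicate. The reduction of ``$\varepsilon_C$ is culf'' to cartesianness over the two maps $d^1\colon[1]\to[2]$ and $s^0\colon[1]\to[0]$ is not available here. Culf means cartesian over \emph{all} active maps; these are generated under composition by all inner cofaces $\delta^i\colon[n]\to[n+1]$ and all codegeneracies, and cartesianness over $\delta^1\colon[1]\to[2]$ does not formally propagate to the higher $\delta^i$. The standard lemma effecting that reduction requires \emph{both} domain and codomain of the map to be decomposition spaces; that $C$ is one is precisely the conclusion you are after, so invoking it is circular, and Segality of $D(C)$ alone does not suffice. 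Your discussion of the codegeneracy squares addresses a different and easier part of the problem; the inner cofaces in degrees $\geq 2$ are simply not treated. The gap is fillable: for arbitrary simplicial sets, culf-ness is equivalent, by pullback cancellation along $[1]\to[n]\to[m]$, to cartesianness over the unique active map $[1]\to[n]$ for every $n\geq 0$. Unwinding that square for $\varepsilon_C$ at level $n$, it asks that every composable $(n-1)$-string in $\C$ beginning at $\varphi(k)$ lift uniquely to an $(n-1)$-string in $D(\C)$ beginning at the object $k$, which follows from the discrete-opfibration condition by lifting one arrow at a time; the $n=0$ case is your conservativity argument, which is fine. With that replacement the proof goes through.
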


  In fact (cf.~\cite[Remark~7.2]{Lack2018Operadic}) the left-normal
  skew monoidal structures induced by unary operadic categories are
  precisely those whose tensor preserves colimits in each variable;
  these can be identified with \emph{skew monoidales} in the monoidal
  bicategory~$\cat{Span}$, and in this case Lack's characterisation
  reduces to one given by
  Andrianopoulos~\cite{Andrianopoulos2016Skew}. Under this
  identification, the unary operadic categories satisfying the
  equivalent conditions of the above theorem correspond to genuine
  monoidales in $\cat{Span}$: in the language
  of~\cite{Galvez-Carrillo2018Decomposition}, this monoidale is the
  \emph{incidence algebra} of the corresponding discrete decomposition
  space.
\end{Rk}
\begin{Rk}
  \label{rk:1}
  The equivalence of Corollary~\ref{cor:1} is also interesting in the
  other direction. If $\C$ is a unary operadic category derived from a
  category with a zero object and kernels, as in Example~\ref{ex:4},
  then the associated simplicial set is a discrete version of
  Waldhausen's $S_\bullet$ construction.  
\end{Rk}

\section{Modified d\'ecalage}
\label{sec:modified-decalage}

We now wish to expand on Theorem~\ref{thm:1} to give a
characterisation of general operadic categories in terms of
d\'ecalage. As explained in the introduction, the key to this will be
a comonad $\mathsf{D}_m$ on the arrow category $\cat{Cat}^\atwo$ given
on objects by
\begin{equation}\label{eq:5}
 \E \xrightarrow{P} \C \qquad \mapsto \qquad \textstyle\sum_{Y \in 
 \E} \E / Y \xrightarrow{\Sigma_{Y \in \E} P/Y} \sum_{Y \in \E} \C / PY\rlap{ ,}
\end{equation}
which we call \emph{modified d\'ecalage}. In this section, we describe
this comonad, and show that it restricts back to the the lax slice
category $\cat{Cat} /\!/ \S$, identified with the full subcategory of
$\cat{Cat}^\atwo$ on the discrete opfibrations with finite fibres.

While we could describe the comonad $\mathsf{D}_m$ and its coalgebras
by hand, we prefer in the spirit of the rest of the paper to obtain it
by way of more general considerations. The key is the following
construction on a functor $P \colon \E \rightarrow \C$. It begins by
decomposing $\E$ and $\C$ into their connected components:
\begin{equation*}
 \E = \textstyle \sum_{y \in Y} \E_{y} \qquad \quad \text{and} \quad \qquad \C =
 \sum_{x \in X} \C_x\rlap{ .}
\end{equation*}
Now for each $y \in Y$, the restriction of $P$ to $\E_{y}$ must factor
through a single connected component $\C_{fy}$ of
$\C$. If we write
$P_{y} \colon \E_{y} \rightarrow \C_{fy}$ for this
factorisation, then summing the $P_{y}$'s over all $y \in Y$ yields the
first map $L_P$ in a factorisation:
\begin{equation}\label{eq:6}
 \cd[@!C]{
 \textstyle \sum_{y \in Y} \E_{y} \ar[rr]^P \ar[rd]_{L_P} && \sum_{x 
 \in X} \C_{x} \\
 &  \textstyle \sum_{y \in Y} \C_{fy} \ar[ru]_{R_P}
 }
\end{equation}
whose second map $R_P$ maps the $y$-summand to the $fy$-summand via
$1_{\C_{\smash{fy}}}$. Let us call a functor \emph{$\pi_0$-bijective} if,
like $L_P$, the induced function on connected components is
invertible, and \emph{$\pi_0$-cartesian} if, like $R_P$, it maps each
connected component of its domain bijectively onto a connected
component of its codomain. As these two classes of functors are easily
seen to be orthogonal, we have a factorisation system
($\pi_0$-bijective, $\pi_0$-cartesian) on $\cat{Cat}$; and so 
by~\cite[Theorem~5.10]{Im1986On-classes} we have:

\begin{Lemma}
 \label{lem:1}
 The full subcategory $\pi_0\text-\cat{Bij}$ of $\cat{Cat}^\atwo$
 whose objects are the the $\pi_0$-bijective functors is a
 coreflective subcategory. The counit of the coreflection at $P$ is
 given by the morphism $(1, R_P) \colon L_P \rightarrow P$ in
 $\cat{Cat}^\atwo$.
\end{Lemma}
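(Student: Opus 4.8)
The plan is to realise this as an instance of a general fact about factorisation systems: for any orthogonal factorisation system $(\L, \R)$ on a category $\K$, the full subcategory of $\K^\atwo$ spanned by those morphisms lying in the left class $\L$ is coreflective, the coreflection of an object $P \colon A \to B$ being the left factor $L_P$ of a chosen $(\L, \R)$-factorisation $P = R_P L_P$, and the counit being the morphism $(1, R_P) \colon L_P \to P$ of $\K^\atwo$. This is essentially~\cite[Theorem~5.10]{Im1986On-classes}, but is short enough to recall in full. Granting it, the lemma is immediate: we have already noted that the $\pi_0$-bijective and $\pi_0$-cartesian functors constitute the two classes of a factorisation system on $\cat{Cat}$, the factorisation of $P$ being the one displayed in~\eqref{eq:6}, in which $L_P$ lies in the left ($\pi_0$-bijective) class and $R_P$ in the right ($\pi_0$-cartesian) class by construction.

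To prove the general fact I would verify the universal property of the counit directly. Fix the factorisation $P = R_P L_P \colon A \to B$; let $Q \colon C \to D$ be any map in $\L$, and let $(a, b) \colon Q \to P$ be a morphism of $\K^\atwo$. I must exhibit a unique factorisation of $(a,b)$ through $(1, R_P) \colon L_P \to P$. Composing a prospective morphism $(u,v) \colon Q \to L_P$ with $(1, R_P)$ gives $(u, R_P v)$, so such a factorisation is forced to have $u = a$ and $R_P v = b$; writing $b'$ for $v$, the condition that $(a, b')$ be a morphism $Q \to L_P$ becomes $L_P a = b' Q$. Hence the whole of the data reduces to a single arrow $b'$ from $D$ to the codomain of $L_P$ satisfying $b' Q = L_P a$ and $R_P b' = b$ — precisely, a diagonal filler for the square with top edge $L_P a$, bottom edge $b$, left edge $Q \in \L$ and right edge $R_P \in \R$. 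This square commutes since $R_P L_P a = P a = b Q$, the last equality being the statement that $(a,b)$ is a morphism of $\K^\atwo$; orthogonality $\L \perp \R$ then supplies such a $b'$ and guarantees its uniqueness. Finally, $(a, b') \colon Q \to L_P$ automatically lies in the full subcategory on $\L$-maps, since $Q$ and $L_P$ both do, which completes the verification.

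I do not expect a genuine obstacle here; the only point demanding care — and what makes the cited result convenient to have — is the bookkeeping distinguishing the left from the right class, together with confirming the items so far only asserted: that $\pi_0$-bijective and $\pi_0$-cartesian functors are orthogonal, and that the functors $L_P$ and $R_P$ extracted from~\eqref{eq:6} really lie in these respective classes. Both are routine, since on connected components a $\pi_0$-cartesian functor is an injection whose image is a union of components while a $\pi_0$-bijective functor is a bijection, so that the required diagonal fillers exist and are unique upon inspecting the situation one connected component at a time.
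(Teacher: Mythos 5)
Your proposal is correct and follows the same route as the paper, which likewise deduces the lemma from the $(\pi_0\text{-bijective}, \pi_0\text{-cartesian})$ factorisation system together with the general coreflectivity result of \cite[Theorem~5.10]{Im1986On-classes}; the paper simply cites that theorem, whereas you also unpack its proof via the unique diagonal filler for the square with left edge $Q \in \L$ and right edge $R_P \in \R$, which is a correct and standard verification. Nothing further is needed.
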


\begin{Rk}
  \label{rk:2}
  Whenever $H \colon \T \rightarrow \B$ is a Grothendieck fibration, 
  there is a factorisation system on $\T$ whose left and right classes
  are, respectively, the maps inverted by $H$, and the cartesian maps
  with respect to $H$. The above factorisation system arises in this
  way from the connected components functor
  $\pi_0 \colon \cat{Cat} \rightarrow \cat{Set}$.
\end{Rk}
Now, if the $P\colon \E\to\C$ of~\eqref{eq:6} is a strictly
local-terminal-preserving functor between categories endowed with
local terminal objects, then there is a \emph{unique} way of endowing
the interposing $\sum_{y} \C_{fy}$ with local terminal
objects such that both $L_P$ and $R_P$ preserve them strictly. It
follows that the ($\pi_0$-bijective, $\pi_0$-cartesian) factorisation
system on $\cat{Cat}$ lifts to $\cat{Cat}_{\ell t}$, and so again
by~\cite[Theorem~5.10]{Im1986On-classes}:

\begin{Lemma}
 \label{lem:2}
 The full subcategory $\pi_0\text-\cat{Bij}_{\ell t}$ of
 $(\cat{Cat}_{\ell t})^\atwo$ whose objects are the $\pi_0$-bijective
 functors is a coreflective subcategory. The counit of the
 coreflection at $P$ is given by the morphism
 $(1, R_P) \colon L_P \rightarrow P$ in $(\cat{Cat}_{\ell t})^\atwo$.
\end{Lemma}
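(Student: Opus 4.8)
The plan is to transfer the proof of Lemma~\ref{lem:1} along the faithful forgetful functor $U\colon \cat{Cat}_{\ell t}\to\cat{Cat}$. I would declare a morphism of $\cat{Cat}_{\ell t}$ to be \emph{$\pi_0$-bijective} (respectively \emph{$\pi_0$-cartesian}) precisely when its underlying functor is, and then check that these two classes form an orthogonal factorisation system on $\cat{Cat}_{\ell t}$; once that is in place, the lemma follows by applying~\cite[Theorem~5.10]{Im1986On-classes} in $(\cat{Cat}_{\ell t})^\atwo$ exactly as Lemma~\ref{lem:1} followed from the factorisation system on $\cat{Cat}$, and the description $(1,R_P)\colon L_P\to P$ of the counit drops out of that theorem verbatim.

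Three things need checking, the first two of which are essentially free. First, that each class has the closure properties demanded of one half of a factorisation system: since $U$ is faithful and reflects isomorphisms---an invertible functor that strictly preserves chosen local terminals has an inverse that does so too---and the corresponding classes in $\cat{Cat}$ have these properties, so do their $U$-preimages. Second, that every morphism of $\cat{Cat}_{\ell t}$ admits such a factorisation: this is exactly what the paragraph before the lemma provides, as it equips the interposing category $\sum_{y}\C_{fy}$ with the unique local-terminal structure making both $L_P$ and $R_P$ strict, so that $P=R_P\circ L_P$ is already a factorisation inside $\cat{Cat}_{\ell t}$.

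The third point, and the one place I expect to do any genuine work, is orthogonality of the two lifted classes. Given a commuting square in $\cat{Cat}_{\ell t}$ whose left edge $Q$ is $\pi_0$-bijective and whose right edge $W$ is $\pi_0$-cartesian, the underlying square in $\cat{Cat}$ has a unique diagonal filler $d$, and the task is to see that $d$ is automatically strict. I would reason as follows: if $u$ is the chosen terminal in some connected component of the source of $d$, then $Wd(u)$ is the image of $u$ under the bottom edge, hence a chosen terminal of the codomain of $W$; since $W$ is $\pi_0$-cartesian, its restriction to the connected component of $d(u)$ is an isomorphism onto a connected component of the codomain, and since $W$ is strict it carries that component's chosen terminal to the chosen terminal of the image component, which is $Wd(u)$; injectivity then forces $d(u)$ to be the chosen terminal, so $d$ preserves chosen local terminals. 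With orthogonality established, the factorisation system lifts and Lemma~\ref{lem:2} follows as indicated. The only real obstacle is thus this diagonal-preservation argument; everything else is a formal consequence of $U$ being faithful and iso-reflecting, together with the construction already carried out before the statement.
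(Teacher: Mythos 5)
Your proposal is correct and follows exactly the route the paper takes: the paper's ``proof'' is precisely the observation, in the paragraph preceding the lemma, that the $(\pi_0\text{-bijective},\pi_0\text{-cartesian})$ factorisation system lifts from $\cat{Cat}$ to $\cat{Cat}_{\ell t}$ (via the unique strict local-terminal structure on the interposing category), followed by another application of \cite[Theorem~5.10]{Im1986On-classes}. The only difference is that you spell out the orthogonality check---that the diagonal filler automatically preserves chosen local terminals---which the paper leaves implicit, and your argument for it is sound.
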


\begin{Rk}
  \label{rk:3}
  The lifting of the $(\pi_0$-bijective, $\pi_0$-cartesian)
  factorisation system from $\cat{Cat}$ to $\cat{Cat}_{\ell t}$ is in
  fact also the lifting of the \emph{comprehensive} factorisation
  system~\cite{Street1973The-comprehensive}, whose classes are the
  final functors and the discrete fibrations. So the
  category $\pi_0\text-\cat{Bij}_{\ell t}$ is equally the full
  subcategory of $(\cat{Cat}_{\ell t})^\atwo$ on the final functors.
\end{Rk}

Now, if we let $\mathsf{L}$ and $\mathsf{L}_{\ell t}$ denote the
idempotent comonads on $\cat{Cat}^\atwo$ and
$(\cat{Cat}_{\ell t})^\atwo$ corresponding to the coreflective
subcategories of the last two lemmas, then it is evident from their
explicit descriptions that $\mathsf{L}_{\ell t}$ is a
\emph{lifting}---in the sense of~\cite{Beck1969Distributive}---of
$\mathsf{L}$ along the strictly comonadic
$(\cat{Cat}_{\ell t})^\atwo \rightarrow \cat{Cat}^\atwo$. It follows
by the proposition in \sec 2 of \emph{ibid}.\ that the composite
adjunction
\begin{equation}\label{eq:8}
 \cd{
 {\pi_0\text-\cat{Bij}_{\ell t}} \ar@<-4.5pt>[r]_-{} \ar@{<-}@<4.5pt>[r]^-{} \ar@{}[r]|-{\top} &
 {(\cat{Cat}_{\ell t})^{\atwo}} \ar@<-4.5pt>[r]_-{} \ar@{<-}@<4.5pt>[r]^-{} \ar@{}[r]|-{\top} &
 {\cat{Cat}^\atwo}
 }
\end{equation}
is also strictly comonadic. Thus, if we define the \emph{modified
 d\'ecalage comonad} $\mathsf{D}_m$ to be the comonad generated by
this adjunction, then we have:

\begin{Prop}
 \label{prop:2}
 The category $(\cat{Cat}^\atwo)^{\mathsf{D}_m}$ of
 $\mathsf{D}_m$-coalgebras is isomorphic over $\cat{Cat}^\atwo$ to
 the full subcategory $\pi_0\text-\cat{Bij}_{\ell t}$ of
 $(\cat{Cat}_{\ell t})^\atwo$ on the $\pi_0$-bijective functors.
\end{Prop}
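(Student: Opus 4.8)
The plan is to obtain the statement essentially for free from the strict comonadicity of the composite adjunction~\eqref{eq:8}, which was established above. Recall that a functor $U \colon \mathcal{A} \to \mathcal{B}$ with a right adjoint is \emph{strictly comonadic} exactly when the canonical comparison functor $K \colon \mathcal{A} \to \mathcal{B}^{\mathsf{G}}$ into the category of coalgebras for the generated comonad $\mathsf{G}$ is an \emph{isomorphism} of categories; and since $K$ always commutes with the two forgetful functors down to $\mathcal{B}$, it is then automatically an isomorphism over $\mathcal{B}$. Taking $\mathcal{A} = \pi_0\text-\cat{Bij}_{\ell t}$, $\mathcal{B} = \cat{Cat}^\atwo$, $U$ the composite left adjoint of~\eqref{eq:8}, and $\mathsf{G} = \mathsf{D}_m$ --- which is the comonad generated by~\eqref{eq:8}, by definition --- gives precisely the claimed isomorphism $(\cat{Cat}^\atwo)^{\mathsf{D}_m} \cong \pi_0\text-\cat{Bij}_{\ell t}$ over $\cat{Cat}^\atwo$.

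For completeness I would recall why~\eqref{eq:8} is strictly comonadic, since a composite of comonadic functors need not be comonadic in general. The idempotent comonad $\mathsf{L}_{\ell t}$ on $(\cat{Cat}_{\ell t})^\atwo$ is a strict \emph{lifting}, in the sense of~\cite{Beck1969Distributive}, of the idempotent comonad $\mathsf{L}$ on $\cat{Cat}^\atwo$ along the strictly comonadic forgetful functor $V \colon (\cat{Cat}_{\ell t})^\atwo \to \cat{Cat}^\atwo$: indeed the $(\pi_0$-bijective, $\pi_0$-cartesian) factorisation on $\cat{Cat}_{\ell t}$ is created from the one on $\cat{Cat}$ by forgetting local terminals, so that $V$ sends the coreflection counit $(1,R_P)$ of Lemma~\ref{lem:2} to that of Lemma~\ref{lem:1}, whence $V\mathsf{L}_{\ell t} = \mathsf{L}V$ on the nose together with counits and comultiplications. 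By the proposition in \sec 2 of~\cite{Beck1969Distributive} such a lifting makes the composite adjunction comonadic, and identifies coalgebras for the composite comonad $\mathsf{D}_m$ with $\mathsf{L}_{\ell t}$-coalgebras inside $(\cat{Cat}_{\ell t})^\atwo$; as $\mathsf{L}_{\ell t}$ is idempotent, the latter is just the full coreflective subcategory $\pi_0\text-\cat{Bij}_{\ell t}$ of Lemma~\ref{lem:2}. The one point I expect to require genuine attention --- indeed the only obstacle, the substantive work having been done in the earlier lemmas --- is tracking \emph{strictness} throughout, so that the conclusion is an isomorphism of categories rather than a mere equivalence; this is fine because the lift of $\mathsf{L}$ is strict, $V$ is strictly comonadic, and the coreflection is literally a full subcategory inclusion.

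Finally, to reconcile this abstract description with the formula~\eqref{eq:5}, one computes the underlying endofunctor of $\mathsf{D}_m$ by sending $P \colon \E \to \C$ along the composite right adjoint of~\eqref{eq:8}: the cofree functor $\cat{Cat}^\atwo \to (\cat{Cat}_{\ell t})^\atwo$ of the arrow-category form of Proposition~\ref{prop:D-CoAlg} takes $P$ to the functor $D(P) \colon \sum_{Y \in \E}\E/Y \to \sum_{X \in \C}\C/X$ (equipped with the canonical local terminals $1_Y$), which on the $Y$-summand is $P/Y \colon \E/Y \to \C/PY$; the coreflector of Lemma~\ref{lem:2} then replaces $D(P)$ by its $\pi_0$-bijective part $L_{D(P)}$, and since each slice $\E/Y$ is connected, $\pi_0$ of the domain is $\ob \E$, so that $L_{D(P)}$ is exactly the functor $\sum_{Y \in \E}\E/Y \xrightarrow{\Sigma_{Y \in \E}P/Y}\sum_{Y \in \E}\C/PY$ of~\eqref{eq:5}. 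Everything else is routine unwinding of the already-established constructions.
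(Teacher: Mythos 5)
Your proposal is correct and follows essentially the same route as the paper: the paper likewise obtains Proposition~\ref{prop:2} directly from the observation that $\mathsf{L}_{\ell t}$ is a lifting of $\mathsf{L}$ along the strictly comonadic $(\cat{Cat}_{\ell t})^\atwo \to \cat{Cat}^\atwo$, so that by Beck's proposition the composite adjunction~\eqref{eq:8} is strictly comonadic and its generated comonad $\mathsf{D}_m$ has coalgebra category $\pi_0\text-\cat{Bij}_{\ell t}$ over $\cat{Cat}^\atwo$. Your closing computation identifying the underlying endofunctor with~\eqref{eq:5} is also how the paper proceeds (in the discussion surrounding~\eqref{eq:7}), so nothing is missing.
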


By combining Proposition~\ref{prop:D-CoAlg} and Lemma~\ref{lem:2}, we
see that the cofree functor
$\cat{Cat}^\atwo \rightarrow (\cat{Cat}^\atwo)^{\mathsf{D}_m}$ sends
the object $P \colon \E \rightarrow \C$ of
$\cat{Cat}^\atwo$ to the object
\begin{equation}\label{eq:7}
 D_m(P) \quad = \quad \textstyle \sum_{Y \in \E} \E / Y \xrightarrow{\Sigma_{Y \in \E} P/Y} \sum_{Y \in \E} \C / PY
\end{equation}
endowed in domain and codomain with the respective local terminals
$1_Y$ and $1_{PY}$ for each $Y \in \E$. Furthermore, the counit at $P$
of the adjunction~\eqref{eq:8} is the map $D_m(P) \rightarrow P$ of
$\cat{Cat}^\atwo$ whose two components
$\sum_{Y} \E / Y \rightarrow \E$ and
$\sum_{Y} \C / PY \rightarrow \C$ are
given by the appropriate copairings of slice projections.

We now show that the comonad $\mathsf{D}_m$ on $\cat{Cat}^\atwo$
restricts to the \emph{lax slice category} $\cat{Cat} /\!/ \S$. The
objects of this category are pairs of a small category $\C$ and a
functor $\abs{\thg}_\C \colon \C \rightarrow \S$, while morphisms
$(\C, \abs{\thg}_\C) \rightarrow (\someothercat,
\abs{\thg}_\someothercat)$ are pairs of a functor $F$ and natural
transformation $\nu$ fitting into a diagram:
\begin{equation}\label{eq:12}
 \cd[@!C]{
 {\C} \ar[rr]^-{F} \ar[dr]_-{\abs{\thg}_\C} & \rtwocell{d}{\nu} &
 {\someothercat}\rlap{ .} \ar[dl]^-{\abs{\thg}_\someothercat} \\ &
 {\S}
 }
\end{equation}

To embed $\cat{Cat} /\!/\S$ into $\cat{Cat}^\atwo$, we use the
\emph{category of elements} construction. For a functor
$\somesetvaluedfunctor \colon \C \rightarrow \cat{Set}$, its category
of elements $\mathrm{el}(\somesetvaluedfunctor)$ has objects given by
pairs $(X \in \C, i \in \somesetvaluedfunctor X)$, and maps
$(Y,j) \rightarrow (X,i)$ given by maps $f \in \C(Y,X)$ with
$(\somesetvaluedfunctor f)(j) = i$. Associated to the category of
elements we have a discrete opfibration
$\pi_\somesetvaluedfunctor \colon \mathrm{el}(\somesetvaluedfunctor )
\rightarrow \C$ sending $(X,i)$ to $X$; recall that a functor
$P \colon \E \rightarrow \C$ is a \emph{discrete opfibration} if, for
every $Y \in \E$ and $f \colon P Y \rightarrow X$ in $\C$, there is a
unique map $\bar f \colon Y \rightarrow \bar X$ with $P \bar f = f$.
In particular, to each $(\C, \abs{\thg}_\C) \in \cat{Cat} /\!/\S$ we
can associate the discrete opfibration
$P_\C \colon \E_\C \rightarrow \C$ obtained as the projection from the
category of elements of
$\abs{\thg}_\C \colon \C \rightarrow \S \hookrightarrow \cat{Set}$.

\begin{Prop}
 \label{prop:3}
 The assignation
 $(\C, \abs{\thg}_\C) \mapsto (P_\C \colon \E_\C \rightarrow \C)$ is the
 action on objects of a fully faithful functor
 $\Upsilon \colon \cat{Cat} /\!/ \S \rightarrow \cat{Cat}^\atwo$. Its
 essential image comprises the discrete opfibrations with finite
 fibres, and choosing an isomorphism with an object in the image
 amounts to endowing each of these fibres with a linear order.
\end{Prop}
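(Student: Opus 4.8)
The plan is to extend $\Upsilon$ to morphisms using the functoriality of the category-of-elements construction, to verify full faithfulness directly, and to identify the essential image via the standard correspondence between discrete opfibrations over a category and $\Set$-valued functors on it. To extend $\Upsilon$, given a morphism $(F,\nu) \colon (\C, \abs{\thg}_\C) \to (\someothercat, \abs{\thg}_\someothercat)$ as in~\eqref{eq:12}, I would let $\Upsilon(F,\nu)$ be the commuting square in $\cat{Cat}^\atwo$ whose lower edge is $F$ and whose upper edge $\E_\C \to \E_\someothercat$ sends an object $(X,i)$ to $(FX, \nu_X(i))$ and a morphism $f \colon (Y,j) \to (X,i)$ --- that is, an arrow $f \in \C(Y,X)$ with $\abs f(j) = i$ --- to the arrow $Ff \colon (FY, \nu_Y(j)) \to (FX, \nu_X(i))$; this last is a legitimate morphism of $\E_\someothercat$ precisely because naturality of $\nu$ gives $\abs{Ff}(\nu_Y(j)) = \nu_X(\abs f(j)) = \nu_X(i)$. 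Functoriality of $\Upsilon$ in $(F,\nu)$, using the evident formula for composition of morphisms in the lax slice $\cat{Cat} /\!/ \S$, is then a routine check.

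For faithfulness, observe that $\Upsilon(F,\nu)$ recovers $F$ as its lower edge and recovers each $\nu_X(i)$ as the second coordinate of the image of $(X,i)$ under its upper edge --- which lies over $FX$ by commutativity of the square. For fullness, let $(H,F) \colon P_\C \to P_\someothercat$ be an arbitrary morphism of $\cat{Cat}^\atwo$; commutativity forces $H(X,i)$ to lie over $FX$, so $H(X,i) = (FX, \nu_X(i))$ for a unique $\nu_X(i) \in \abs{FX}$. The family $\nu = (\nu_X)$ is natural: for $f \colon Y \to X$ in $\C$ and $j \in \abs Y$, the arrow $f$ is a morphism $(Y,j) \to (X, \abs f(j))$ of $\E_\C$, and applying $H$ and then projecting along $P_\someothercat$ yields $\abs{Ff}(\nu_Y(j)) = \nu_X(\abs f(j))$, which is exactly the naturality square for $\nu$ at $f$ evaluated at $j$. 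Finally $\Upsilon(F,\nu)$ and $(H,F)$ have the same lower edge and the same action on objects, hence also the same action on morphisms, since both become $F \circ P_\C$ after composition with the faithful $P_\someothercat$; so $\Upsilon(F,\nu) = (H,F)$, and $\Upsilon$ is fully faithful.

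For the essential image, note first that $P_\C$ is a discrete opfibration --- the unique lift of $f \colon X \to X'$ at $(X,i)$ being $f \colon (X,i) \to (X', \abs f(i))$ --- with fibre over $X$ the finite set $\abs X$; since both being a discrete opfibration and having finite fibres are stable under isomorphism in $\cat{Cat}^\atwo$, the essential image is contained in the class of discrete opfibrations with finite fibres. Conversely, given such a $P \colon \E \to \C$, the Grothendieck correspondence presents it, over $\C$, as the projection $\el(Q) \to \C$ of the functor $Q \colon \C \to \Set$ with $Q(X) = P^{-1}(X)$ and $Q(f)$ given by cocartesian transport; each $Q(X)$ is finite. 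Choosing, for each $X$, a bijection $\theta_X \colon Q(X) \to \underline{n_X}$ with $n_X = \abs{Q(X)}$ --- equivalently, a linear order on the fibre $P^{-1}(X)$ --- and conjugating $Q$ by the $\theta_X$ produces a functor $\abs{\thg}_\C \colon \C \to \S$ that is naturally isomorphic to $Q$, so that $\Upsilon(\C, \abs{\thg}_\C)$, the projection from $\el(\abs{\thg}_\C) \cong \el(Q) \cong \E$, is isomorphic to $P$ over $\C$. Since, conversely, the $\theta_X$ are forced once $\abs{\thg}_\C$ is fixed, a choice of such an isomorphism over $\C$ amounts precisely to a choice of linear order on each fibre of $P$, which is the final assertion. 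Every step here is routine bookkeeping; the only points demanding care are the naturality verification inside the fullness argument, and the elementary observation --- on which the closing clause rests --- that a bijection of a finite set $S$ with $\underline{\abs S}$ is the same datum as a linear order on $S$.
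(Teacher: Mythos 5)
Your proposal is correct and follows essentially the same route as the paper's proof: full faithfulness by reading off $F$ and $\nu$ from the forced form $G(X,i)=(FX,\nu_X(i))$ of a commutative square, the essential image via the Grothendieck correspondence between discrete opfibrations and $\cat{Set}$-valued functors, and the final clause via the identification of a linear order on a finite set with a bijection to some $\underline n$. The only difference is that you spell out a few routine points the paper leaves implicit (the explicit definition of $\Upsilon$ on morphisms, and the use of faithfulness of $P_\someothercat$ to see that agreement on objects determines agreement on morphisms).
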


While this result is well known, we prove it for the sake of
self-containedness.

\begin{proof}
 If $(\C, \abs{\thg}_\C)$ and
 $(\someothercat, \abs{\thg}_\someothercat)$ are objects of
 $\cat{Cat} /\!/\S$, then a map $P_\C \rightarrow P_\someothercat$ of
 $\cat{Cat}^\atwo$ is a commutative square
 \begin{equation*}
 \cd{
 \E_\C \ar[r]^{G}\ar[d]_{P_\C} & \E_\someothercat
 \ar[d]^{P_\someothercat} \\
 \C \ar[r]^F & \someothercat\rlap{ .}
 }
 \end{equation*}
 Commutativity forces
 $G(X,i) = (FX, \nu_X(i))$ for suitable
 $\nu_X(i) \in \abs{FX}_\someothercat$, so yielding functions
 $\nu_X \colon \abs{X}_\C \rightarrow \abs{FX}_\someothercat$, which by
 applying $G$ to morphisms we see are natural in $X$. So every
 map $P_\C \rightarrow P_\someothercat$ arises from a lax
 triangle~\eqref{eq:12}, and it is easy to see that any such triangle
 induces a map $P_\C \rightarrow P_\someothercat$ in this manner.

 So $\Upsilon$ is well defined and fully faithful. As for its
 essential image, it is well known (and easily proved) that
 $H \colon \E \rightarrow \C$ is a discrete opfibration just when it
 is isomorphic over $\C$ to
 $\pi_\somesetvaluedfunctor \colon \el(\somesetvaluedfunctor )
 \rightarrow \C$ for some functor
 $\somesetvaluedfunctor \colon \C \rightarrow \cat{Set}$. In this
 case, $H$ will have finite fibres just when
 $\pi_\somesetvaluedfunctor $ does so, which happens just when each
 $\somesetvaluedfunctor (B)$ is finite. But such a
 $\somesetvaluedfunctor $ may always be replaced by an isomorphic one
 which factors through $\S \subseteq \Set$, and so the discrete
 opfibration $H$ has finite fibres just when it is in the essential
 image of $\Upsilon$.

 Finally, the fibre of $P_\C \colon \E_\C \rightarrow \C$ over 
 $X \in \C$ is the set $\{(X,i) : i \in \abs{X}_\C\}$ which inherits a
 linear order from $\abs{X}_\C$. So any specified isomorphism $H \cong
 P_\C$ induces by transport of structure a linear order on each fibre
 of $H$. Conversely, given a linear order on the fibres of $H$, we
 may reconstruct an isomorphism with $P_\C$ by requiring each map on
 fibres to be a \emph{monotone} isomorphism.
\end{proof}

We now show that the modified d\'ecalage comonad $\mathsf{D}_m$ on
$\cat{Cat}^\atwo$ restricts back to a comonad on $\cat{Cat} /\!/ \S$.

\begin{Prop}
 \label{prop:4}
 The essential image of
 $\Upsilon \colon \cat{Cat} /\!/ \S \rightarrow \cat{Cat}^\atwo$ is
 closed under the action of modified d\'ecalage, which thus restricts
 to a comonad $\mathsf{D}_m$ on $\cat{Cat} /\!/ \S$. The category of
 coalgebras $(\cat{Cat} /\!/ \S)^{\mathsf{D}_m}$ is isomorphic to the
 lax slice $\cat{Cat}_{\ell t} /\!/ \S$.
\end{Prop}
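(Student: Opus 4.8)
The plan is to establish the two assertions separately. For the closure statement, let $P \colon \E \rightarrow \C$ be a discrete opfibration with finite fibres, classified by a functor $\abs{\thg} \colon \C \rightarrow \S$, so that up to isomorphism over $\C$ we have $\E = \E_\C$. I would compute the arrow $D_m(P) = \Sigma_{Y \in \E} P/Y$ of~\eqref{eq:7} directly: its codomain is $\sum_{X \in \C,\, i \in \abs X} \C / X$, and a short inspection shows that the fibre of $D_m(P)$ over an object $((X,i),\, w \colon W \rightarrow X)$ is in natural bijection with $\{\, j \in \abs W : \abs{w}(j) = i \,\}$, i.e.\ with the $\S$-fibre $\abs{w}^{-1}(i)$ of~\eqref{eq:10}. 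Hence $D_m(P)$ is isomorphic over its codomain to the discrete opfibration classified by the functor $\sum_{X,i}\C/X \rightarrow \S$ sending $((X,i),w)$ to $\abs{w}^{-1}(i)$ and a map of slices to the corresponding fibre map~\eqref{eq:11} --- in particular it is again a discrete opfibration with finite fibres, so it lies in the essential image of $\Upsilon$. Since that essential image is a full subcategory (Proposition~\ref{prop:3}), every counit and comultiplication component of $\mathsf{D}_m$ between such objects again lands in it, so $\mathsf{D}_m$ restricts to a comonad on the essential image, and hence, via the equivalence $\Upsilon$, to one on $\cat{Cat}/\!/\S$, whose action on objects is the construction $(\C, \abs{\thg}) \mapsto (\sum_{X,i}\C/X,\ \abs{\thg}_{D_m})$ --- the category being the domain of~\eqref{eq:9}.

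For the coalgebra statement I would first observe that applying the fully faithful $\Upsilon$ and then Proposition~\ref{prop:2} identifies $(\cat{Cat}/\!/\S)^{\mathsf{D}_m}$ with the full subcategory of $\pi_0\text-\cat{Bij}_{\ell t}$ on those $\pi_0$-bijective, local-terminal-preserving functors that arise on the nose as the projection $P_\C \colon \E_\C \rightarrow \C$ from the category of elements of some $(\C, \abs{\thg}) \in \cat{Cat}/\!/\S$. Thus a $\mathsf{D}_m$-coalgebra is exactly such a datum $(\C, \abs{\thg})$ together with local-terminal structures on $\C$ and on $\E_\C$ making $P_\C$ both $\pi_0$-bijective and terminal-preserving, and the heart of the argument is to check that this extra structure is the same as a local-terminal structure on $\C$ for which $\abs{uX} = \underline 1$ for every chosen terminal $uX$. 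In one direction: given such a structure on $\C$, the full subcategory of $\E_\C$ lying over any connected component of $\C$ is connected with genuine terminal object $(uX, \ast)$ --- each $(Y,j)$ in it admits a unique map to $(uX, \ast)$ over $\tau_Y$, using $\abs{uY} = \underline 1$ --- so $\E_\C$ inherits a forced local-terminal structure and $P_\C$ becomes $\pi_0$-bijective and terminal-preserving. Conversely: given such structures on $\C$ and $\E_\C$, $\pi_0$-bijectivity makes the part of $\E_\C$ over the component of $X$ a single connected component, whose chosen terminal must map under $P_\C$ to $uX$ and hence be some $(uX, j)$ with $j \in \abs{uX}$; since it is terminal there, all the $(uX, j')$ lie in it, and $uX$ has only its identity endomorphism, $j$ must be the sole element of $\abs{uX}$, so $\abs{uX} = \underline 1$ and the $\E_\C$-structure is the canonical one.

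It remains to match morphisms, which is routine. Via Proposition~\ref{prop:2} and $\Upsilon$, a morphism of $\mathsf{D}_m$-coalgebras is a commuting square in $(\cat{Cat}_{\ell t})^{\atwo}$ over the relevant $P_\C$'s; such a square is precisely a pair $(F, \nu)$ as in~\eqref{eq:12} with $F$ preserving chosen local terminals, the compatibility of the induced functor $(X,i) \mapsto (FX, \nu_X(i))$ with the local terminals of the $\E_\C$'s being automatic, since each $\nu_{uX} \colon \underline 1 \rightarrow \underline 1$ is forced to be the identity. Assembling the object and morphism correspondences yields the desired isomorphism $(\cat{Cat}/\!/\S)^{\mathsf{D}_m} \cong \cat{Cat}_{\ell t}/\!/\S$; that it is an honest isomorphism, and not a mere equivalence, is exactly the point that, once $\abs{uX} = \underline 1$ holds, every remaining piece of the coalgebra structure is uniquely pinned down. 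I expect the main obstacle to be the ``heart'' step of the second paragraph: the analysis of the connected components of the category of elements $\E_\C$ must be sharp enough both to construct the canonical terminal-object structure and to show the $\pi_0$-bijectivity and preservation conditions permit nothing more than the condition $\abs{uX} = \underline 1$; everything else is formal.
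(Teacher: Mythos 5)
Your proposal is correct and follows essentially the same route as the paper: closure of the essential image is verified by computing the fibres of $D_m(P_\C)$ over objects $((X,i),w)$ as $\abs{w}^{-1}(i)$ (the paper does this by reducing to the slices $P_\C/(X,i)$ of the discrete opfibration $P_\C$, which also supplies the unique-lifting property that your ``hence it is again a discrete opfibration'' quietly relies on), and the coalgebra identification proceeds by the same analysis of the forced local-terminal structure on $\E_\C$ together with the $\pi_0$-bijectivity constraint $\abs{uX}=\underline 1$. Your more detailed treatment of the component analysis of $\E_\C$ and of the morphism correspondence, which the paper leaves to the reader, is accurate.
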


\begin{proof}
 Given $(\C, \abs{\thg})$ in $\cat{Cat} /\!/\S$, applying $D_m$ to
 the corresponding $P_\C \colon \E_\C \rightarrow \C$ in
 $\cat{Cat}^\atwo$ yields by~\eqref{eq:7} the functor
 \begin{equation}\label{eq:13}
 \textstyle \sum_{X \in \C, i \in {\abs X}} \E_\C / (X,i) \xrightarrow{\Sigma_{X, i
 } P_\C/(X,i)} \sum_{X \in \C, i \in {\abs X}} \C / X\rlap{ .}
 \end{equation}
 We must show this is a discrete opfibration with finite fibres. Since
 functors of this kind are closed under coproducts, it suffices to
 show that each $P_\C / (X,i) \colon \E_\C / (X,i) \rightarrow \C / X$
 is a discrete opfibration with finite fibres. It is a discrete
 opfibration since it is a slice of the discrete opfibration $P_\C$;
 as for the fibres, given $f \colon Y \rightarrow X$ in $\C / X$, the
 objects over it in $\E_\C / (X,i)$ are maps of $\E_\C$ of the form
 $f \colon (Y,j) \rightarrow (X,i)$, which are indexed by the
 \emph{finite} set $\{\,j \in \abs{Y} : \abs{f}(j) = i\,\}$.

 It follows that $\mathsf{D}_m$ restricts back to a comonad on
 $\cat{Cat} /\!/\S$, and the corresponding category of coalgebras fits
 into a pullback
 \begin{equation*}
 \cd{
 {(\cat{Cat}/\!/\S)^{\mathsf{D}_m}} \pullbackcorner \ar[r]^-{} \ar[d]_{U} &
 {(\cat{Cat}^\atwo)^{\mathsf{D}_m}} \ar[d]^{U} \\
 {\cat{Cat}/\!/\S} \ar[r]^-{\Upsilon} &
 {\cat{Cat}^\atwo}\rlap{ .}
 }
 \end{equation*}

 Now given $(\C, \abs{\thg}) \in \cat{Cat} /\!/ \S$, endowing its
 image $P_\C \colon \E_\C \rightarrow \C$ under $\Upsilon$ with
 $\mathsf{D}_m$-coalgebra structure means, first of all, endowing $\C$
 with local terminal objects. Having done this, we must endow $\E_\C$
 with local terminals such that $P_\C$ preserves them, and it is easy
 to see that the unique way of doing this is by choosing the set
 $\{(X, i) : \text{$X$ is local terminal in $\C$, $i \in \abs X$}\}$.
 Finally, to assert that $P_\C$ is $\pi_0$-bijective, there must be a
 \emph{unique} $(X,i)$ over each chosen local terminal of $\C$, which
 is to say that $\abs{X} = \underline 1$ for each local terminal of
 $\C$. So objects of $(\cat{Cat} /\!/ \S)^{\mathsf{D}_m}$ are in
 bijection with those of $\cat{Cat}_{\ell t} /\!/ \S$. The argument on
 maps is similar and left to the reader.
\end{proof}

\section{Characterising lax-operadic categories}
\label{sec:char-lax-oper}

In this section, we take the procedure employed in
Section~\ref{sec:line-oper-categ} for the d\'ecalage comonad on
$\cat{Cat}$---considering its category of coalgebras, then the monad
induced on the category of coalgebras, and then the algebras for
\emph{that} monad---and apply it to the modified d\'ecalage comonad on
$\cat{Cat} /\!/ \S$. By doing so, we come very close to obtaining a
characterisation of operadic categories. What we in fact characterise
are instances of the more general notion of \emph{lax-operadic}
category. These generalise operadic categories by replacing the fact
of the commutativity of the triangles~\eqref{axiomstriangle} by the
data of coherent $2$-cells filling these triangles. 

\begin{Defn}
  \label{def:lax-operadic-category}
  A \emph{lax-operadic category} is given by the following data, which
  augment those of an operadic category by the addition
  of~\ref{data:lax-operadic-Q4}:
  \begin{enumerate}[label=(D\tstyle{\arabic*})]
  \item  A category $\C$ endowed with local
    terminal objects;
    \item A cardinality functor
      $\abs{\thg} \colon \C \to \S$;
    \item  For all $X\in \C$ and 
      $i \in \abs{X}$ a fibre functor
      $\fibre_{X,i} \colon \C/X \to \C$ notated as before;
  \item \label{data:lax-operadic-Q4} For each $f \colon Y \to X$ in
    $\C$ and $i \in \abs X$, a \emph{relabelling function}
    \begin{equation*}
      \gamma_{f,i} \colon {\abs f}^{-1}(i) \rightarrow
      \abs{\smash{f^{-1}(i)}}\rlap{ .}
    \end{equation*}
  \end{enumerate}
  These data are subject to the following axioms, which are as for an
  operadic category, except that~\ref{axQ:BM-67} and
  and~\ref{axQ:BM-fibres-of-local-fibres} are suitably modified to
  take account of the relabelling functions
  of~\ref{data:lax-operadic-Q4}. In
  stating~\ref{axQ:BM-fibres-of-local-fibres-lax}, we write $\gamma j$
  and $\varepsilon j$ for the images of $j \in \abs{f}^{-1}(i)$ under
  $\gamma_{f,i} \colon \abs{f}^{-1}(i) \rightarrow \abs{f^{-1}(i)}$
  and
  $\varepsilon_{\abs f, i} \colon \abs{f}^{-1}(i) \rightarrow \abs Y$.
  
  \begin{enumerate}[label=(A\tstyle{\arabic*}-lax)]
  \item[(A\tstyle 1)\phantom{-lax}] If $X$ is a local terminal then
    $\abs{X}=\underline 1$;
  \item[(A\tstyle 2)\phantom{-lax}]  For all $X \in \C$ and
    $i \in \abs X$, the object $(1_X)^{-1}(i)$ is chosen terminal;
  \addtocounter{enumi}{2}
  \item For all $g \colon fg \rightarrow f$ in $\C / X$ and
    $i \in \abs X$, the fibre map is compatible with relabelling, in the
    sense that
    $\abs{\smash{g^f_i}} \circ \gamma_{fg,i} = \gamma_{f,i} \circ
    \abs{g}^{\abs f}_i$;
    \label{axQ:BM-67-lax}
  \item[(A\tstyle 4)\phantom{-lax}] For $X \in \C$, one has
    $\tau_X^{-1}(\ast) = X$, and for $f \colon Y \to X$, one has
    $f^{\tau_X}_\ast = f$;
  \addtocounter{enumi}{1}
  \item \label{axQ:BM-fibres-of-local-fibres-lax}
    For $g \colon fg \rightarrow f$ in $\C/X$, $i \in \abs X$ and
    $j \in \abs{f}^{-1}(i)$ one has that
    $(g^f_i)^{-1}(\gamma j) = g^{-1}(\varepsilon j)$ and that the
    square left below commutes:
    \begin{equation}\label{eq:16}
      \cd{
        (\abs{g}^{\abs f}_i)^{-1}(j) \ar[r]^-{{\bar
            \gamma}_{fg,i}} \ar@{=}[d]_-{} &
        \abs{\smash{g^f_i}}^{-1}(\gamma j) \ar[d]^-{\gamma_{g^f_i,
            \gamma j}} \\
        \abs{g}^{-1}(\varepsilon j) \ar[r]_-{\gamma_{g, \varepsilon j}} &
        \abs{\smash{g^{-1}(\varepsilon j)}}
      } \quad
      \cd{
        (\abs{g}^{\abs f}_i)^{-1}(j) \ar[r]^-{{\bar
            \gamma}_{fg,i}} \ar[d]_-{\varepsilon_{\abs{g}^\abs{f}_i, j}} &
        \abs{\smash{g^f_i}}^{-1}(\gamma j)
        \ar[d]^-{\varepsilon_{\abs{\smash{g^f_i}}, \gamma j}} \\
        {\abs{fg}^{-1}(i)} \ar[r]_-{\gamma_{fg, i}} &
        \abs{\smash{(fg)^{-1}(i)}}
      }
    \end{equation}
    where $\bar \gamma_{fg,i}$ is the unique map making the square right
    above commute. Given moreover $h \colon fgh \rightarrow fg$ in
    $\C / X$, one has
    $\smash{(h^{fg}_i)^{g^f_i}_{\gamma j} = h^g_{\varepsilon j}}$.

  \end{enumerate}

  A strictly local-terminal-preserving functor
  $F \colon \C \rightarrow \someothercat$ between lax-operadic
  categories is called a \emph{lax-operadic functor} if it comes endowed
  with a natural family of relabelling functions
  $\nu_{X} \colon \abs X \rightarrow \abs{FX}$, which are compatible
  with fibre functors in the sense of rendering commutative each diagram
  of the form:
  \begin{equation*}
    \xymatrix{
    \C/X \ar[rr]^-{\fibre_{X,i}} \ar[d]_{F/X} && \C \ar[d]^F \\
    \someothercat/FX \ar[rr]_-{\fibre_{FX,\nu_X(i)}} &&
    \someothercat\rlap{ ;} }
  \end{equation*}
  in other words, we have $F(f^{-1}(i)) = (Ff)^{-1}(\nu_X(i))$ and
  $F(g^f_i) = (Fg)^{Ff}_{\nu_X(i)}$ for all
  $g \colon fg \rightarrow f$ in $\C / X$ and $i \in \abs X$. We write
  $\cat{Lax \O p\C at}$ for the category of lax-operadic categories
  and lax-operadic functors.
\end{Defn}

It is perhaps worth type-checking the display
in~\ref{axQ:BM-fibres-of-local-fibres-lax} to see that it makes sense.
In the left square, the left edge is well-defined simply by computing
cardinalities of fibres; while the right edge is well-defined by the
equality $(g^f_i)^{-1}(\gamma j) = g^{-1}(\varepsilon j)$ asserted
directly beforehand. In the right square, for the factorisation
$\bar \gamma_{fg,i}$ to exist, we must know that $\gamma_{fg,i}$ maps
each $k \in \abs{fg}^{-1}(i)$ with $\abs{g}^{\abs f}_{i}(k) = j$ to an
element $k' \in \abs{(fg)^{-1}(i)}$ with
$\abs{\smash{g^f_i}}(k')= \gamma_{f,i}(j)$; but this follows from the
equality
$\abs{\smash{g^f_i}} \circ \gamma_{fg,i} = \gamma_{f,i} \circ
\abs{g}^{\abs f}_i$ asserted in~\ref{axQ:BM-67-lax}. 




We now begin our abstract rederivation of lax-operadic categories in
terms of modified d\'ecalage. 
Recall that in Proposition~\ref{prop:4}, we exhibited the category of
coalgebras for the modified d\'ecalage comonad on $\cat{Cat} /\!/\S$
as isomorphic to the lax slice category $\cat{Cat}_{\ell t}  /\!/\S$.
Thus we are justified in giving:

\begin{Defn}
 \label{def:3}
 The \emph{modified d\'ecalage monad} $\mathsf{\tilde D}_m $ on
 $\cat{Cat}_{\ell t} /\!/ \S \cong (\cat{Cat} /\!/ \S)^{\mathsf{D}_m}$ is the monad induced by the forgetful--cofree adjunction
  $(\cat{Cat} /\!/ \S)^{\mathsf{D}_m}
  \leftrightarrows \cat{Cat} /\!/ \S$.
\end{Defn}

Towards a concrete description of the modified d\'ecalage
monad, we note that the sets $\{ j \in \abs{Y} : \abs{f}(j) = i\}$
giving the fibres of~\eqref{eq:13} inherit linear orders from
$\abs Y$, so that we may use the last clause of
Proposition~\ref{prop:3} to obtain a particular instantiation of the
forgetful--cofree adjunction for the modified d\'ecalage comonad on
$\cat{Cat} /\!/\S$. The cofree functor
$\cat{Cat} /\!/\S \rightarrow (\cat{Cat} /\!/\S)^{\mathsf{D}_m}$ sends
an object $(\C, \abs{\thg})$ to the object
$(D_m(\C), \abs{\thg}_{D_m(\C)})$, where
$D_m(\C) = \Sigma_{X \in \C, i \in \abs X} \C / X$ is the codomain
of~\eqref{eq:13}, with the chosen terminal $1_X$ in the connected
component indexed by $(X,i)$, and where
$\abs{\thg}_{D_m(\C)} \colon D_m(\C) \rightarrow \S$ is defined on
objects and morphisms by
\begin{equation}
 \label{eq:14}
 \begin{aligned}
 (X \in \C, i \in \abs X, f \colon Y \rightarrow X) \qquad &\mapsto
 \qquad \abs{f}^{-1}(i) \\
 (X,i,fg) \xrightarrow{g} (X,i,f) \qquad & \mapsto \qquad
 \abs{fg}^{-1}(i) \xrightarrow{\smash{\abs{g}^{\abs f}_i}}
 \abs{f}^{-1}(i)\rlap{ .}
 \end{aligned}
\end{equation}

The counit at $(\C, \abs{\thg})$ of the forgetful--cofree adjunction
is given by a lax triangle
\begin{equation}\label{eq:20}
 \cd[@!C@C-1.5em]{
 {D_m(\C)} \ar[rr]^{E_\C} \ar[dr]_-{\abs{\thg}_{D_m(\C)}} &
 \rtwocell{d}{\varepsilon_\C} &
 {\C} \ar[dl]^-{\abs{\thg}} \\ &
 {\S}
 }
\end{equation}
wherein the functor
$E_\C \colon \Sigma_{X \in \C, i \in \abs X} \C / X \rightarrow \C$ is
the copairing of the slice projections, and the natural transformation
$\varepsilon_\C$ has component at $(X,i,f)$ given by the map
$\varepsilon_{\abs f,i} \colon {\abs f}^{-1}(i) \rightarrow \abs Y$
of~\eqref{eq:10}. We now use this to read off a description of the
modified d\'ecalage monad on
$(\cat{Cat} /\!/ \S)^{\mathsf{D}_m} \cong \cat{Cat}_{\ell t} /\!/ \S$.

\begin{enumerate}[(i),itemsep=0.25\baselineskip,widest=iii]
\item The underlying functor $\tilde D_m \colon \cat{Cat}_{\ell t}
  /\!/ \S \rightarrow \cat{Cat}_{\ell t} /\!/ \S$ is given on objects by
 $(\C, \abs{\thg}) \mapsto (D_m(\C), \abs{\thg}_{D_m(\C)})$ as above,
 and on morphisms by:
 \begin{equation*}
 \cd[@!C@C-1.8em]{
 {\C} \ar[rr]^-{F} \ar[dr]_-{\abs{\thg}_\C} & \rtwocell{d}{\nu} &
 {\someothercat} \ar[dl]^-{\abs{\thg}_\someothercat} \ar@{}[drr]|-{\!\!\!\!\!\!\!\!\displaystyle\mapsto}& &
 {D_m(\C)} \ar[rr]^-{D_m(F)} \ar[dr]_-{\abs{\thg}_{D_m(\C)}} &
 \rtwocell{d}{D_m(\nu)} &
 {D_m(\someothercat)\rlap{ .}} \ar[dl]^-{\abs{\thg}_{D_m(\someothercat)}} \\ &
 {\S} & & & &
 {\S}
 }
 \end{equation*}
 Here $D_m(F)$ has action on objects
 $(X, i, f) \mapsto (FX, \nu_X(i), Ff)$ and action on maps
 inherited from $F$; while the component of $D_m(\nu)$ at an
 object $(X,i,f)$ is the unique map rendering commutative the square
 \begin{equation*}
 \cd[@C+3em]{
 {\abs{f}_\C^{-1}(i)} \ar[r]^-{D_m(\nu)_{(X,i,f)}}
 \ar[d]_{\varepsilon_{\abs{f}, i}} &
 {\abs{Ff}_\someothercat^{-1}(\nu_X(i))} \ar[d]^{\varepsilon_{\abs{Ff},
 \nu_X(i)}} \\
 {\abs X}_\C \ar[r]^-{\nu_Y} &
 {\abs {FX}}_\someothercat\rlap{ .}
 }
 \end{equation*}
\item The unit
 $\eta_{\C} \colon (\C, \abs{\thg}) \rightarrow {\tilde D}_m(\C,
 \abs{\thg})$ is a \emph{strictly} commuting triangle, whose upper
 edge is the functor $\C \rightarrow D_m(\C)$ sending $X$ to
 $(uX, 1, \tau_X)$ and sending $f \colon Y \rightarrow X$ to
 $f \colon (uX, 1, \tau_X) \rightarrow (uY, 1, \tau_Y)$.
\item The multiplication
 $\mu_\C \colon {\tilde D}_m{\tilde D}_m(\C, \abs{\thg}) \rightarrow
 {\tilde D}_m(\C, \abs{\thg})$ is also a \emph{strictly} commuting
 triangle, whose upper edge is the functor
 \begin{equation}\label{eq:18}
 \textstyle \sum_{(X,i,f) \in D_m \C, j \in
 \abs{f}^{-1}(i)} D_m \C / (X,i,f) \rightarrow 
 \sum_{X \in \C, i \in \abs X} \C / X
 \end{equation}
 defined as follows. Since a typical map in $D_m \C$ is of the form
 $g \colon (X,i,fg) \rightarrow (X,i,f)$, an object of the domain
 of~\eqref{eq:18} comprises the data of
 \begin{equation}\label{eq:15}
 X \in \C, i \in \abs X, f \colon Y \rightarrow X, j \in
 \abs{f}^{-1}(i), g \colon Z \rightarrow Y
 \end{equation}
 while each morphism is of the form
 $h \colon (X,i,f,j,gh) \rightarrow (X,i,f,j,g)$. In these terms, we
 can define the functor~\eqref{eq:18} on objects and morphisms by
 \begin{equation}
 \label{eq:17}
 \begin{aligned}
 (X,i,f,j,g) \quad &\mapsto
 \quad (Y,\varepsilon j, g) \\
 (X,i,f,j,gh) \xrightarrow{\smash h} (X,i,f,j,g) \quad & \mapsto \quad
 (Y,\varepsilon j, gh) \xrightarrow{\smash h}
 (Y,\varepsilon j, g)\rlap{ ,}
 \end{aligned}
 \end{equation}
 where, like before, we write $\varepsilon j$ for
 $\varepsilon_{\abs f,i}(j)$.
\end{enumerate}

Using this description, we can now give our second main result.

\begin{Thm}
 \label{thm:2}
 The category of algebras for the modified d\'ecalage monad
 $\mathsf{\tilde D}_m$ on
 $\cat{Cat}_{\ell t} /\!/ \S \cong (\cat{Cat}/\!/\S)^{\mathsf{D}_m}$
 is isomorphic to the category $\cat{Lax\O p\C at}$ of lax-operadic
 categories.
\end{Thm}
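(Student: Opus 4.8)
The plan is to follow the trajectory of Theorem~\ref{thm:1}, now with the modified d\'ecalage monad $\mathsf{\tilde D}_m$ on $\cat{Cat}_{\ell t} /\!/ \S$ in place of the d\'ecalage monad on $\cat{Cat}_{\ell t}$, and to match the monad laws against the lax-operadic axioms one at a time. First I would fix an object $(\C, \abs{\thg})$ of $\cat{Cat}_{\ell t} /\!/ \S$; by Proposition~\ref{prop:4} this is precisely a category endowed with local terminal objects together with a cardinality functor satisfying~\ref{axQ:BM-abs(lt)}---that is, the data~\ref{data:operadic-Q1}--\ref{data:operadic-Q2} subject to~\ref{axQ:BM-abs(lt)}. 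A $\mathsf{\tilde D}_m$-algebra structure on it is a map $\varphi \colon \tilde D_m(\C, \abs{\thg}) \to (\C, \abs{\thg})$ in $\cat{Cat}_{\ell t} /\!/ \S$ satisfying the unit and associativity laws; by the explicit description of $\tilde D_m$ recorded above, such a $\varphi$ is a lax triangle over $\S$ whose functor component $D_m(\C) \to \C$ is equivalently a family of local-terminal-preserving functors $\varphi_{X,i} \colon \C / X \to \C$, and whose $2$-cell component has at each $(X, i, f \colon Y \to X)$ a function $\abs f^{-1}(i) \to \abs{f^{-1}(i)}$. Reading off~\eqref{eq:14}, the functor component is precisely the fibre-functor data~\ref{data:operadic-Q3} (its preservation of local terminals being axiom~\ref{axQ:BM-fibres-of-identities}), the $2$-cell component is precisely the relabelling data~\ref{data:lax-operadic-Q4}, and naturality of the $2$-cell at the generic morphism $g \colon (X, i, fg) \to (X, i, f)$ of $D_m(\C)$ unwinds to exactly axiom~\ref{axQ:BM-67-lax}. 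Thus to equip $(\C, \abs{\thg})$ with a $\mathsf{\tilde D}_m$-structure map is the same as to give the data~\ref{data:operadic-Q1}--\ref{data:lax-operadic-Q4} subject to~\ref{axQ:BM-abs(lt)}, \ref{axQ:BM-fibres-of-identities} and~\ref{axQ:BM-67-lax}.

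The remaining task is to identify the two monad laws with the two remaining axioms. The unit law $\varphi \circ \eta_\C = 1$ unwinds, on its functor component, to $\tau_X^{-1}(\ast) = X$ and $f^{\tau_X}_\ast = f$ (since, as recorded above, $\eta_\C$ sends $X$ to the object $(uX, 1, \tau_X)$ of $D_m(\C)$ and acts as $f$ on morphisms), which is axiom~\ref{axQ:BM-fibres-of-tau-maps}; on its $2$-cell component it makes the relabellings $\gamma_{\tau_X, \ast} \colon \abs X \to \abs X$ equal to identities, which I would regard as part of the content of~\ref{axQ:BM-fibres-of-tau-maps} in the lax setting. The associativity law $\varphi \circ \mu_\C = \varphi \circ \tilde D_m(\varphi)$ is where the real work lies. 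An object of $\tilde D_m \tilde D_m(\C)$ is a tuple $(X, i, f \colon Y \to X, j \in \abs f^{-1}(i), g \colon Z \to Y)$ as in~\eqref{eq:15}. On the functor component, $\varphi \circ \mu_\C$ sends it to $g^{-1}(\varepsilon j)$ by~\eqref{eq:17}, while $\varphi \circ \tilde D_m(\varphi)$ sends it to $(g^f_i)^{-1}(\gamma j)$; equating these, and carrying out the parallel computation on a morphism $h \colon (X,i,f,j,gh) \to (X,i,f,j,g)$ to obtain $(h^{fg}_i)^{g^f_i}_{\gamma j} = h^g_{\varepsilon j}$, recovers the object and morphism clauses of axiom~\ref{axQ:BM-fibres-of-local-fibres-lax}. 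On the $2$-cell component one uses that $\mu_\C$ is strict, and that the component of $D_m(\gamma)$ at $(X,i,f,j,g)$ is exactly the factorisation $\bar\gamma_{fg,i}$ appearing in~\eqref{eq:16}, to see that associativity of the $2$-cells is the commutativity of the left-hand square of~\eqref{eq:16}---its left edge being the canonical identification $(\abs g^{\abs f}_i)^{-1}(j) = \abs g^{-1}(\varepsilon j)$ in $\S$. Hence the monad laws are precisely axioms~\ref{axQ:BM-fibres-of-tau-maps} and~\ref{axQ:BM-fibres-of-local-fibres-lax}, so $\mathsf{\tilde D}_m$-algebras correspond bijectively to lax-operadic categories.

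For morphisms I would argue likewise: a map $(F, \nu)$ of $\mathsf{\tilde D}_m$-algebras is a map of $\cat{Cat}_{\ell t} /\!/ \S$ commuting with the structure maps, and on the functor component this forces $F(f^{-1}(i)) = (Ff)^{-1}(\nu_X(i))$ and $F(g^f_i) = (Fg)^{Ff}_{\nu_X(i)}$, which together with naturality of $\nu$ is precisely the notion of lax-operadic functor (the condition on the $2$-cell component being straightforward). Since both bijections are manifestly compatible with composition and identities, this yields the claimed isomorphism of categories.

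The main obstacle will be the associativity step: one must keep straight, across the two composite lax triangles, the three distinct pieces of data the law encodes---the object equality $(g^f_i)^{-1}(\gamma j) = g^{-1}(\varepsilon j)$, the filling square of~\eqref{eq:16}, and the morphism equality $(h^{fg}_i)^{g^f_i}_{\gamma j} = h^g_{\varepsilon j}$---and verify the canonical identifications of fibres-of-fibre-maps with fibres in $\S$ that make the two sides of each clause literally, not merely isomorphically, equal. Everything else runs closely parallel to the unary case already treated in Theorem~\ref{thm:1}.
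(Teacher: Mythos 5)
Your proposal follows essentially the same route as the paper's own proof: identify the object/structure-map data with (D1)--(D4) plus (A1)--(A3-lax), the unit law with (A4), and the three components of the associativity law (functor on objects, $2$-cell, functor on morphisms) with the three clauses of (A5-lax), exactly as in the paper. Your one point of divergence---observing that the unit law also forces the relabellings $\gamma_{\tau_X,\ast}$ to be identities, which you fold into (A4)---is in fact a detail the paper's proof passes over silently, so if anything your account is the more careful one.
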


\begin{proof}
  The data~\ref{data:operadic-Q1}--\ref{data:operadic-Q2} and
  axiom~\ref{axQ:BM-abs(lt)} specify exactly an object
  $(\C, \abs{\thg})$ in $\cat{Cat}_{\ell t} /\!/ \S$. Giving the fibre
  functors~\ref{data:operadic-Q3} is equivalent to giving a single
  functor $\varphi \colon D_m(\C) \rightarrow \C$, and the relabelling
  maps of~\ref{data:lax-operadic-Q4} give the components of a natural
  transformation
 \begin{equation}\label{eq:19}
 \cd[@!C@C-2em]{
 \sh{l}{0.25em}{D_m(\C)} \ar[rr]^{\varphi} \ar[dr]_-{\abs{\thg}_{D_m(\C)}} &
 \rtwocell{d}{\gamma} &
 {\C} \ar[dl]^-{\abs{\thg}} \\ &
 {\S}
 }
 \end{equation}
 whose naturality is then asserted by~\ref{axQ:BM-67-lax}. Since
 axiom~\ref{axQ:BM-fibres-of-identities} asserts that $\varphi$
 in~\eqref{eq:19} is a map of $\cat{Cat}_{\ell t}$, we conclude
 that 
 giving the data for a lax-operadic category plus the first three
 axioms is the same as giving an object $(\C, \abs{\thg})$ of
 $(\cat{Cat} /\!/ \S)^{\mathsf{D}_m}$ endowed with a morphism
 $(\phi, \gamma) \colon \tilde D_m(\C, \abs{\thg}) \rightarrow (\C,
 \abs{\thg})$.

 It is not hard to see that~\ref{axQ:BM-fibres-of-tau-maps} is
 equivalent to $(\varphi, \gamma)$ satisfying the the unit axiom
 $(\phi, \gamma) \circ (\eta_{(\C, \abs{\thg})}, 1) = (1_\C,
 1_{\abs{\thg}})$ for a $\mathsf{\tilde D}_m$-algebra; we claim,
 finally, that~\ref{axQ:BM-fibres-of-local-fibres-lax} asserts the
 multiplication axiom given by the equality of pastings:
 \begin{equation}\label{eq:21}
 \cd[@!C@C-1em@R+0.4em]{
 D_m D_m \C \ar[dr]_-{\abs{\thg}_{D_mD_m(\C)}}
 \ar[r]^-{D_m \phi} & D_m \C \ar[r]^-{\phi}
 \rtwocell[0.37]{dl}{D_m(\gamma)} \rtwocell[0.37]{dr}{\gamma}
 \ar[d]|-{\abs{\thg}_{D_m\C}} & \C
 \ar[dl]^-{\abs{\thg}} \\ &
 \S & {}
 } \quad = \quad
 \cd[@!C@C-1.2em@R+0.4em]{
 D_m D_m \C \ar[dr]_-{\abs{\thg}_{D_mD_m(\C)}}
 \ar[r]^-{\mu_\C} & D_m \C \ar[r]^-{\phi}
 \ar[d]|-{\abs{\thg}_{D_m\C}} \twoeq[0.37]{dl}
 \rtwocell[0.37]{dr}{\gamma} & \C\rlap{ .}
 \ar[dl]^-{\abs{\thg}} \\ &
 \S & {}
 }
 \end{equation}

 Now, the functors across the top of~\eqref{eq:21} act on a typical
 object~\eqref{eq:15} of $D_m D_m \C$ by the respective assignations:
 \begin{equation*}
 \begin{array}{l@{\ }l@{\ }l}
 (X,i,f,j,g) &\ \mapsto\ (f^{-1}(i),
 \gamma j, g^f_i) & \ \mapsto\ 
 (g^f_i)^{-1}(\gamma j)\\
 \llap{and \quad} (X,i,f,j,g) &\ \mapsto\ (Y, \varepsilon j, g) & \
 \mapsto\ 
 g^{-1}(\varepsilon j)\rlap{ ,}
 \end{array}
 \end{equation*}
 whose equality is precisely the first clause
 of~\ref{axQ:BM-fibres-of-local-fibres-lax}. On the other hand, at
 this same object~\eqref{eq:15}, the components of the two composite
 natural transformations in~\eqref{eq:21} are given by the two sides
 of the left square of~\eqref{eq:16}---whose equality is the second
 clause of~\ref{axQ:BM-fibres-of-local-fibres-lax}. Finally, the
 actions on a map $h \colon (X,i,f,j,gh) \rightarrow (X,i,f,j,g)$ of
 $D_m D_m \C$ of the functors across the top of~\eqref{eq:21} are
 given by
 \begin{equation*}
 h \ \mapsto\ {h}^{fg}_i \ \mapsto\ 
 ({h}^{fg}_i)^{g^f_i}_{\gamma j} \qquad \text{and} \qquad 
 h \ \mapsto\ h \
 \mapsto\ 
 h^g_{\varepsilon j}\rlap{ ,}
 \end{equation*}
 whose equality is precisely the final clause
 of~\ref{axQ:BM-fibres-of-local-fibres-lax}. This proves that
 ${\mathsf{\tilde D}}_m$-algebras in
 $(\cat{Cat} /\!/ \S)^{\mathsf{D}_m}$ correspond bijectively with
 lax-operadic categories. A similar argument verifies the same for the
 maps between them, and we leave this to the reader.
\end{proof}

\section{Characterising operadic categories}
\label{sec:char-oper-categ}

There is not much left to do to get from the preceding result to our
main result, characterising genuine operadic categories in terms of
d\'ecalage. If we define a morphism of $\cat{Cat}_{\ell t} /\!/ \S$ as
in~\eqref{eq:12} to be \emph{strict} whenever the natural
transformation $\nu$ therein is an identity, then it is immediate from
the preceding proof that:

\begin{Prop}
 \label{prop:5}
 Under the isomomorphism of Theorem~\ref{thm:2}, a
 $\mathsf{\tilde D}_m$-algebra corresponds to an operadic category
 just when its algebra structure map in $\cat{Cat}_{\ell t} /\!/ \S$
 is strict; while a $\mathsf{\tilde D}_m$-algebra morphism
 corresponds to an operadic functor just when its underlying map in
 $\cat{Cat}_{\ell t} /\!/ \S$ is strict.
\end{Prop}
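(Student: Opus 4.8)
The plan is to read off the claim directly from the explicit correspondence set up in the proof of Theorem~\ref{thm:2}. There, a $\mathsf{\tilde D}_m$-algebra with structure map $(\varphi, \gamma) \colon \tilde D_m(\C, \abs{\thg}) \rightarrow (\C, \abs{\thg})$ is matched with the lax-operadic category whose fibre functors are encoded by $\varphi$ and whose relabelling function $\gamma_{f,i} \colon \abs f^{-1}(i) \rightarrow \abs{f^{-1}(i)}$ is the component at $(X,i,f)$ of the natural transformation $\gamma$ of~\eqref{eq:19}. By the definition of strictness, the structure map $(\varphi, \gamma)$ is strict exactly when $\gamma$ is an identity $2$-cell, that is, when every $\gamma_{f,i}$ is an identity function; in particular $\abs f^{-1}(i)$ and $\abs{f^{-1}(i)}$ then literally coincide.

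The main step is to verify that imposing $\gamma_{f,i} = \mathrm{id}$ for all $f,i$ turns the lax axioms into the strict ones. Naturality of $\gamma$, namely~\ref{axQ:BM-67-lax}, collapses to $\abs{\smash{g^f_i}} = \abs{g}^{\abs f}_i$, which, together with the identifications $\abs f^{-1}(i) = \abs{f^{-1}(i)}$ forced by triviality of $\gamma$, is exactly axiom~\ref{axQ:BM-67}. In~\ref{axQ:BM-fibres-of-local-fibres-lax} we have $\gamma j = j$, so the first clause becomes $(g^f_i)^{-1}(j) = g^{-1}(\varepsilon j)$; the auxiliary map $\bar\gamma_{fg,i}$, being the unique factorisation through the now-identical monotone injections $\varepsilon_{\abs{g}^{\abs f}_i,j}$ and $\varepsilon_{\abs{\smash{g^f_i}},j}$, degenerates to an identity, so the left-hand square of~\eqref{eq:16} commutes trivially; and the last clause reads $(h^{fg}_i)^{g^f_i}_{j} = h^g_{\varepsilon j}$. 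These are precisely the clauses of axiom~\ref{axQ:BM-fibres-of-local-fibres}. Hence lax-operadic categories with trivial relabellings are exactly operadic categories, and conversely every operadic category, viewed as a lax-operadic category with trivial relabellings, has strict structure map; this settles the assertion about objects.

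For morphisms, I would appeal to the part of the proof of Theorem~\ref{thm:2} (left to the reader there) identifying $\mathsf{\tilde D}_m$-algebra morphisms with lax-operadic functors: one with underlying $\cat{Cat}_{\ell t} /\!/ \S$-map $(F, \nu)$ corresponds to the lax-operadic functor $F$ whose relabelling family $\nu_X \colon \abs X \rightarrow \abs{FX}$ consists of the components of $\nu$. The underlying map is strict iff $\nu$, hence each $\nu_X$, is an identity; and then the lax-operadic-functor conditions $F(f^{-1}(i)) = (Ff)^{-1}(\nu_X(i))$ and $F(g^f_i) = (Fg)^{Ff}_{\nu_X(i)}$ become strict preservation of fibres and fibre maps, while strictness of the triangle~\eqref{eq:12} is precisely strict commutation with the cardinality functors. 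Thus the strict $\mathsf{\tilde D}_m$-algebra morphisms between objects of the form described above are exactly the operadic functors.

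No genuine obstacle arises here; the only point needing a moment's care is the degeneration of $\bar\gamma_{fg,i}$ in~\eqref{eq:16} to an identity once the relabellings are trivial, so that the square~\eqref{eq:16} vanishes rather than imposing a spurious extra condition.
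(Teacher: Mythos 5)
Your proposal is correct and follows exactly the route the paper intends: the paper offers no separate argument for this proposition beyond the remark that it is ``immediate from the preceding proof'', and your verification that trivialising the relabellings $\gamma_{f,i}$ (resp.\ $\nu_X$) collapses \ref{axQ:BM-67-lax} and \ref{axQ:BM-fibres-of-local-fibres-lax} to \ref{axQ:BM-67} and \ref{axQ:BM-fibres-of-local-fibres}, with $\bar\gamma_{fg,i}$ degenerating to an identity, is precisely the spelling-out of that remark. Nothing is missing.
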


At this point, it is \emph{not} possible to restrict the modified
d\'ecalage comonad $\mathsf{D}_m$ on $\cat{Cat} /\!/ \S$ back to the
strict slice category $\cat{Cat} / \S$, and obtain operadic categories
as algebras for the induced monad ${\mathsf{\tilde D}}_m$ on
$(\cat{Cat} / \S)^{\mathsf{D}_m}$. The reason for this, as noted in
the introduction, is simply that modified d\'ecalage $\mathsf{D}_m$
does not restrict from $\cat{Cat} /\!/ \S$ to $\cat{Cat} / \S$, since
its counit maps~\eqref{eq:20} are only lax triangles.

However, the modified d\'ecalage monad $\mathsf{\tilde D}_m$ on
$(\cat{Cat} /\!/ \S)^{\mathsf{D}_m} \cong \cat{Cat}_{\ell t} /\!/ \S$
\emph{does} interact well with strictness: inspection of the
description following Definition~\ref{def:3} shows that the functor
${\tilde D}_m$ preserves strictness of triangles, and that each unit
and multiplication component is a strict triangle. We are
thus justified in~giving:
\begin{Defn}
 \label{def:1}
 The \emph{modified d\'ecalage monad} $\mathsf{\tilde D}_m$ on
 $\cat{Cat}_{\ell t} / \S$ is the restriction to $\cat{Cat}_{\ell t}
 / \S$ of the modified
 d\'ecalage monad $\cat{Cat}_{\ell t} /\!/ \S$.
\end{Defn}

And so, from Theorem~\ref{thm:2} and Proposition~\ref{prop:5}, our main
result immediately follows:

\begin{Thm}
 \label{thm:4}
 The category of algebras for the modified d\'ecalage monad
 ${\mathsf{\tilde D}}_m$ on $\cat{Cat}_{\ell t}/\S$ is isomorphic to
 the category $\cat{Op\C at}$ of operadic categories.
\end{Thm}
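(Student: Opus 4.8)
The plan is to deduce Theorem~\ref{thm:4} essentially formally from Theorem~\ref{thm:2}, Proposition~\ref{prop:5} and Definition~\ref{def:1}, the only genuine work being a piece of bookkeeping about how restricting a monad along a wide (bijective-on-objects) subcategory inclusion interacts with the passage to Eilenberg--Moore categories.

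First I would make the relationship between the two categories of algebras precise. The inclusion $\cat{Cat}_{\ell t}/\S \hookrightarrow \cat{Cat}_{\ell t}/\!/\S$ is bijective on objects, and identifies the maps of $\cat{Cat}_{\ell t}/\S$ with the \emph{strict} maps of $\cat{Cat}_{\ell t}/\!/\S$ in the sense of Proposition~\ref{prop:5}. By the explicit description following Definition~\ref{def:3}, the functor $\tilde D_m$ carries strict maps to strict maps, and each component of its unit $\eta$ and multiplication $\mu$ is a strict triangle; this is exactly what licenses Definition~\ref{def:1}. Consequently, for an object $(\C, \abs{\thg})$ of $\cat{Cat}_{\ell t}/\S$, a $\mathsf{\tilde D}_m$-algebra structure \emph{in} $\cat{Cat}_{\ell t}/\S$ is the same datum as a $\mathsf{\tilde D}_m$-algebra structure in $\cat{Cat}_{\ell t}/\!/\S$ whose structure map happens to be strict: the unit and associativity equations are equations between strict maps, so they hold in the one category precisely when they hold in the other. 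Likewise, an algebra morphism in $\cat{Cat}_{\ell t}/\S$ between two such algebras is precisely an algebra morphism in $\cat{Cat}_{\ell t}/\!/\S$ whose underlying map is strict. Hence the evident comparison functor exhibits $(\cat{Cat}_{\ell t}/\S)^{\mathsf{\tilde D}_m}$ as the (in general non-full) subcategory of $(\cat{Cat}_{\ell t}/\!/\S)^{\mathsf{\tilde D}_m}$ consisting of the algebras with strict structure map and the algebra morphisms with strict underlying map.

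It then remains only to transport this description across the isomorphism $(\cat{Cat}_{\ell t}/\!/\S)^{\mathsf{\tilde D}_m} \cong \cat{Lax\O p\C at}$ of Theorem~\ref{thm:2}. By Proposition~\ref{prop:5}, under that isomorphism the algebras with strict structure map are exactly the operadic categories (equivalently, the lax-operadic categories all of whose relabelling functions $\gamma_{f,i}$ are identities), and the algebra morphisms with strict underlying map are exactly the operadic functors. Composing the two identifications yields $(\cat{Cat}_{\ell t}/\S)^{\mathsf{\tilde D}_m} \cong \cat{Op\C at}$, as claimed.

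I do not anticipate any real obstacle: all the combinatorics has already been absorbed into Theorem~\ref{thm:2} and Proposition~\ref{prop:5}. The one step that deserves a little care is the first one---checking that the restriction of the monad in the manner of Definition~\ref{def:1} cuts the Eilenberg--Moore category down to precisely the expected subcategory, introducing neither new algebra structures nor new algebra morphisms---but once the relevant maps, units and multiplications are seen to be strict, this is routine, and the theorem then \emph{immediately follows} exactly as the surrounding text promises.
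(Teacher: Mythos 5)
Your proposal is correct and follows exactly the paper's route: the paper likewise derives Theorem~\ref{thm:4} immediately from Theorem~\ref{thm:2} and Proposition~\ref{prop:5}, with Definition~\ref{def:1} supplying the restriction of the monad. The bookkeeping you spell out about restricting along the wide subcategory of strict triangles is the content the paper leaves implicit, and you have it right.
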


\bibliographystyle{acm}
\bibliography{bibdata}

\begin{thebibliography}{10}

\bibitem{Andrianopoulos2016Skew}
{\sc Andrianopoulos, J.}
\newblock Skew monoidales in {$Span$}.
\newblock Preprint, available as
  \href{https://arxiv.org/abs/1603.08181}{arXiv:1603.08181}, 2016.

\bibitem{Baez1998Higher-Dimensional}
{\sc Baez, J.~C., and Dolan, J.}
\newblock Higher-dimensional algebra {III}: {$n$}-categories and the algebra of
  opetopes.
\newblock {\em Advances in Mathematics 135\/} (1998), 145--206.

\bibitem{Batanin2015Operadic}
{\sc Batanin, M., and Markl, M.}
\newblock Operadic categories and duoidal {D}eligne's conjecture.
\newblock {\em Advances in Mathematics 285\/} (2015), 1630--1687.

\bibitem{Beck1969Distributive}
{\sc Beck, J.}
\newblock Distributive laws.
\newblock In {\em Seminar on Triples and Categorical Homology Theory
  (Z{\"u}rich, 1966/67)}, vol.~80 of {\em Lecture Notes in Mathematics}.
  Springer, 1969, pp.~119--140.

\bibitem{Blackwell1989Two-dimensional}
{\sc Blackwell, R., Kelly, G.~M., and Power, A.~J.}
\newblock Two-dimensional monad theory.
\newblock {\em Journal of Pure and Applied Algebra 59\/} (1989), 1--41.

\bibitem{Boardman1968Homotopy-everything}
{\sc Boardman, J.~M., and Vogt, R.~M.}
\newblock Homotopy-everything {$H$}-spaces.
\newblock {\em Bulletin of the American Mathematical Society 74\/} (1968),
  1117--1122.

\bibitem{Dyckerhoff2012Higher}
{\sc Dyckerhoff, T., and Kapranov, M.}
\newblock Higher {S}egal spaces {I}.
\newblock Preprint, available as
  \href{https://arxiv.org/abs/1212.3563}{arXiv:1212.3563}, 2012.

\bibitem{Galvez-Carrillo2018Decomposition}
{\sc G\'{a}lvez-Carrillo, I., Kock, J., and Tonks, A.}
\newblock Decomposition spaces, incidence algebras and {M}\"{o}bius inversion
  {I}: {B}asic theory.
\newblock {\em Advances in Mathematics 331\/} (2018), 952--1015.

\bibitem{Illusie1972Complexe}
{\sc Illusie, L.}
\newblock {\em Complexe cotangent et d\'{e}formations. {II}}, vol.~283 of {\em
  Lecture Notes in Mathematics}.
\newblock Springer, 1972.

\bibitem{Im1986On-classes}
{\sc Im, G.~B., and Kelly, G.~M.}
\newblock On classes of morphisms closed under limits.
\newblock {\em Journal of the Korean Mathematical Society 23\/} (1986), 1--18.

\bibitem{Joyal1986Foncteurs}
{\sc Joyal, A.}
\newblock Foncteurs analytiques et esp{\`e}ces de structures.
\newblock In {\em Combinatoire {\'e}num{\'e}rative ({M}ontreal, 1985)},
  vol.~1234 of {\em Lecture Notes in Mathematics}. Springer, 1986,
  pp.~126--159.

\bibitem{Kelly2005On-the-operads}
{\sc Kelly, G.~M.}
\newblock On the operads of {J}. {P}. {M}ay.
\newblock {\em Reprints in Theory and Applications of Categories 13\/} (2005),
  1--13.

\bibitem{Lack2018Operadic}
{\sc Lack, S.}
\newblock Operadic categories and their skew monoidal categories of
  collections.
\newblock {\em Higher Structures 2\/} (2018), 1--29.

\bibitem{Markl2002Operads}
{\sc Markl, M., Shnider, S., and Stasheff, J.}
\newblock {\em Operads in algebra, topology and physics}, vol.~96 of {\em
  Mathematical Surveys and Monographs}.
\newblock American Mathematical Society, 2002.

\bibitem{May1972The-geometry}
{\sc May, J.~P.}
\newblock {\em The geometry of iterated loop spaces}, vol.~271 of {\em Lecture
  Notes in Mathematics}.
\newblock Springer, 1972.

\bibitem{Street1973The-comprehensive}
{\sc Street, R., and Walters, R. F.~C.}
\newblock The comprehensive factorization of a functor.
\newblock {\em Bulletin of the American Mathematical Society 79\/} (1973),
  936--941.

\bibitem{Szlachanyi2012Skew-monoidal}
{\sc Szlach{{\'a}}nyi, K.}
\newblock Skew-monoidal categories and bialgebroids.
\newblock {\em Advances in Mathematics 231\/} (2012), 1694--1730.

\end{thebibliography}

\end{document}